%
\documentclass[12pt,reqno]{amsart}
\usepackage{amssymb,amsthm,amscd}
\usepackage[utf8]{inputenc}

\textwidth 130truemm

\newtheorem{theorem}{Theorem}
\newtheorem{proposition}{Proposition}
\newtheorem{lemma}{Lemma}
\newtheorem{corollary}{Corollary}

\theoremstyle{definition}

\newtheorem{definition}{Definition}
\newtheorem{example}{Example}
\newtheorem{remark}{Remark}

\def\Var{\mathrm{Var}}

\def\di{\mathrm{di}}
\def\Com{\mathrm{Com}}
\def\Lie{\mathrm{Lie}}
\def\Leib{\mathrm{Leib}}
\def\Cur{\mathrm{Cur}}
\def\LSym{\mathrm{LSym}}
\def\Perm{\mathrm{Perm}}
\def\SLS{\mathrm{SLS}}
\def\Nov{\mathrm{Nov}}
\def\Der{\mathrm{Der}}
\def\Ker{\mathop{\fam 0 Ker}\nolimits}
\def\wt{\mathop{\fam 0 wt}\nolimits}

\let\<\langle
\let\>\rangle

\textwidth 140mm

\numberwithin{equation}{section}

\title[On the embedding into differential algebras]{On the embedding of left-symmetric algebras into differential Perm-algebras}

\author{P. S. Kolesnikov}

\author{B. K. Sartayev}

\thanks{This work was supported by the Program of fundamental scientific researches of the Siberian Branch of Russian Academy of Sciences, I.1.1, project 0314-2019-0001}

\address{Sobolev Institute of Mathematics, Novosibirsk, Russia}

\begin{document}

\begin{abstract}
Given an associative algebra satisfying the left commutativity 
identity $abc=bac$ (Perm-algebra) with a 
derivation $d$, the new operation $a\circ b = a d(b)$
is left-symmetric (pre-Lie). We derive necessary and 
sufficient conditions for a left-symmetric algebra 
to be embeddable into a differential Perm-algebra.
\end{abstract}

\maketitle

\section{Introduction}

The class of left-symmetric algebras (also known as pre-Lie algebras) 
initially appeared in deformation theory and geometry 
(\cite{Gerst}, \cite{Kosz}, \cite{Vin}). 
By definition, a left-symmetric algebra is a linear space with one 
bilinear multiplication 
satisfying the identity 
\[
(a,b,c) = (b,a,c),
\]
where $(a,b,c)= (ab)c-a(bc)$ is the associator. 

If, in addition, the right commutativity holds, i.e., 
\[
(ab)c=(ac)b,
\]
then such a system is known as a Novikov algebra. 
Novikov algebras appeared
in formal variational calculus \cite{GD79}
and, independently, as a tool for studying Poisson 
brackets of hydrodinamic type \cite{BN83}. 
The structure theory of Novikov 
algebras is well-developed, see, e.g., 
\cite{Liu2019} and references therein.

For example, if $A$ is an associative and commutative 
algebra with a derivation $d$ then the same space $A$
relative to the new product 
\begin{equation}\label{eq:DerProd}
a\circ b = a d(b), \quad a,b\in A,
\end{equation}
is a Novikov algebra.
As shown in \cite{BCZ2018}, this example is generic: every 
Novikov algebra embeds into a differential commutative algebra. 
An alternative way to prove the latter is to apply 
the following observation: the free Novikov algebra 
$\Nov \<X\>$
generated by a set $X$ embeds into the algebra 
$\Com\Der\<X,d\>$
of differential polynomials in $X$ \cite{DzhLofwall2002}, 
and it is easy to show \cite{SK2021_arxiv} that every 
homomorphic image of $\Nov \<X\>$ also embeds 
into a commutative differential algebra. 

In this paper, we study the class  of left-symmetric algebras obtained from non-commutative 
differential algebras by means of the operation 
\eqref{eq:DerProd}. 
Namely, if $A$ is an associative algebra satisfying 
the identity of left commutativity 
$abc = bac$ (i.e., $A$ is a Perm-algebra) 
then for every derivation $d$ on $A$
the operation \eqref{eq:DerProd} turns $A$ into 
a left-symmetric algebra $A^{(d)}$. 

Let us call a left-symmetric algebra {\em special}
if it can be embedded into a differential Perm-algebra
via \eqref{eq:DerProd}.
Although Perm-algebras are very close to commutative ones, 
the description of the class of special left-symmetric algebras 
differs from that of Novikov algebras. 
The most important difference is that it is not a variety: a homomorphic image of a special left-symmetric algebra 
may not be special. We show the class 
of special left-symmetric algebras to be a quasi-variety.
It is easy to see that every special left-symmetric algebra satisfies 
$((ax)y)b=((ay)x)b$ and $(x,ya,b)=(y,xa, b)$.
The corresponding variety of {\em SLS-algebras}
contains the class of special left-symmetric algebras
and we find explicitly the set of quasi-identities
defining special algebras within the SLS-algebras.
Finally, we prove that the class of SLS-algebras 
is the smallest variety containing special left-symmetric algebras. The latter follows from the observation 
that the free SLS-algebra is special.

We believe that the same approach as developed in this paper 
will be useful for finding the complete list of special identities 
for Gelfand--Dorfman algebras. In that case (see \cite{SK2021_arxiv}) the class of special algebras does form 
a variety, and it is known that there are two idependent 
identities of degree~4.

Throughout the paper, $\Bbbk $ is an arbitrary base field.
We will use the following notations. 
If $\Var$ is a variety of algebras then 
$\Var\<X\>$ stands for the free algebra in $\Var $ generated by a set~$X$.
We will use the same symbol $\Var $ to denote 
the operad goverining the variety $\Var $. 

For example, if $\Com $ denotes the variety of 
associative and commutative algebras then 
$\Com \<X\> = \Bbbk [X]$ is the ordinary polynomial algebra. By $\LSym $ we denote the variety of 
left-symmetric algebras, $\Nov \subset \LSym $ stands 
for Novikov algebras, etc.

The class of pairs $(A,d)$, $A\in \Var$, $d$ is a derivation on $A$ is also a variety denoted 
$\Var\Der$. The free algebra in $\Var\Der$ generated by a set $X$ is denoted $\Var\Der \<X,d\>$. Here we include $d$ into the notation for convenience. 
As a $\Var$-algebra, $\Var\Der\<X,d\>$ is isomorphic 
to $\Var \<X^{(\omega )}\>$, where 
\[
X^{(\omega )} = X\cup X' \cup X''\cup \dots \cup X^{(n)}\cup \dots, 
\]
$X^{(n)} = \{x^{(n)} \mid x\in X \}$, $n\ge 0$, 
are  disjoint copies of~$X$. 
The derivation $d$ acts on the 
generators of 
$\Var \<X^{(\omega )}\>$ as $d(x^{(n)})=x^{(n+1)}$.

The interest to the study of differential $\Perm$-algebras
has one more motivation. 
The operad $\Perm $  plays a specific role in the theory 
of dialgebras \cite{Loday2001} and their operads (called replicated operads) \cite{GubKol2013}, \cite{GuoBai2020}: 
given an operad $\Var$, the Manin white product 
of operads $\Perm \circ \Var $ is exactly the operad 
governing the class $\di\Var $ of $\Var$-dialgebras.
Note that for $\Perm $ the Manin white product 
coincides with the Hadamard product of operads. 

On the other hand, the operad $\Nov $ has its own 
distinguished role in the combinatorics of derivations
on non-associative algebras \cite{KSO2019}: given a binary operad $\Var$, 
the Manin white product 
$\Nov\circ \Var$ is the operad governing 
the class of derived $\Var $-algebras. 
The latter are obtained from $\Var $-algebras with 
 a derivation $d$ relative to the new operations 
\[
a\prec b = a d(b),\quad a\succ b = d(a)b
\]
(for each of binary products in $\Var $, if there are more than one operation).

In the problem we consider in this paper, $\Perm $ and 
$\Nov $ meet each other since we have to study differential $\Perm$-algebras. This is why the theory of dialgebras (especially, Novikov dialgebras) 
provides us with effective tools for studying speciality 
of left-symmetric algebras.

The paper is organized as follows. 
 
In Section~2 we recall the definition of a $\Var$-dialgebra and prove an analogue of the embedding theorem from \cite{BCZ2018} for Novikov dialgebras. 
This is done in a routine way by means of the general methods described in \cite{GubKol2013} (see also \cite{GubKol2014}).  Since every 
Novikov dialgebra is in particular a left-symmetric algebra, we obtain that an embedding of a left-symmetric 
algebra into a differential $\Perm$-algebra is equivalent 
to the embedding into a Novikov dialgebra.

Section~3 is devoted to the study of the forgetful 
functor from $\di\LSym $ to $\LSym $. 
It turns out that the left adjoint functor has a very 
natural description: given an algebra $A\in \LSym$, we may define the structure of its universal enveloping left-symmeric dialgebra on the tensor algebra $T(A)$. 

In Section~4, we prove that the free SLS-algebra is special. Namely, the subalgebra of $\Perm\Der\<X,d\>$
generated by the set $X$ relative to the operation 
$a\circ b = a d(b)$ is isomorphic to the free left-symmetric algebra satisfying two identities of SLS-algebras mentioned above. In contrast to 
\cite{DzhLofwall2002}, we do not find the monomial 
basis of $\SLS\<X\>$, but still we can describe its 
linear basis explicitly as a subset of $\Perm\Der\<X,d\>$.

Finally, in Section~5 we derive the necessary conditions 
of speciality for an SLS-algebra in the form of quasi-identities. Then we prove that these conditions are 
also sufficient. Given an $\SLS$-algebra $A$,
we construct an appropriate 
quotient of its universal enveloping $\di\LSym$-algebra 
 constructed in Section~3 to get a Novikov dialgebra envelope of~$A$. The results of Section~2 then lead us to the desired conclusion.

It is easy to construct examples of non-special SLS-algebras, 
however, the free SLS-algebra is special (Section~4). 
Hence, the class of special left-symmetric 
algebras does not form a variety.

\section{Novikov dialgebras and differential Perm-algebras}

Let $\Perm $ denote the class of associative algebras 
satisfying the identity 
\[
xyz-yxz = 0.
\]
The operad governing the variety of such algebras is also 
denoted $\Perm $. It is clear (see \cite{Chapoton01}) that 
$\dim \Perm (n)=n$, and the composition rule is easy to 
describe \cite{Kol2008smz}.

Given a binary quadratic operad $\mathcal O$, the Hadamard product of operads $\mathcal O\otimes \Perm $ 
coincides with the Manin 
white product \cite{Vallette}, and the class of algebras 
governed by $\mathcal O\otimes \Perm $ is known as 
the variety of $\di\,\mathcal O$-algebras (replicated 
$\mathcal O$-algebras \cite{GuoBai2020}, or $\mathcal O$-dialgebras \cite{Kol2008smz}). 
In particular, if $\Com$ and $\Lie $ denote the operads governing commutative and Lie algebras then 
$\di\Com$ and $\di\Lie$ are isomorphic to $\Perm $ 
and $\Leib$, respectively, where $\Leib $ is the operad 
of Leibniz algebras.

If $\dim O(2)=2$ (one binary product with no symmetry) 
then the operad $\di\,\mathcal O$ 
is generated by two operations $\vdash $ and $\dashv $. 
The defining relations of $\di\,\mathcal O$ are easy to derive 
from those of $\mathcal O$
\cite{Kol2008smz}. Namely, every $\di\,\mathcal O$-algebra
satisfies so-called {\em 0-identities}
\begin{equation}\label{eq:zero-id}
\begin{gathered}
(x_1\dashv x_2)\vdash x_3 = (x_1\vdash x_2)\vdash x_3,
\\
x_1\dashv (x_2\vdash x_3) = x_1\dashv (x_2\dashv x_3) .
\end{gathered}
\end{equation}
Moreover, for every multi-linear defining identity 
$f(x_1,\dots , x_n) = 0$ of $\mathcal O$
and for every $i=1,\dots, n$
one should claim 
\[
f_i(x_1,\dots , x_n) = f(x_1,\dots,\dot x_i ,\dots,  x_n) = 0
\]
to hold on $\di\,\mathcal O$. 
Here $f(x_1,\dots,\dot x_i ,\dots,  x_n)$
is derived from $f$ in the following way: in each nonassociative monomial 
$(x_{j_1}\dots x_{j_n})$ the initial binary 
 product is replaced with the products 
$\vdash$ and $\dashv$ so that the horizontal 
dashes point to the fixed variable $x_i$.

\begin{example}\label{exmp:diNov}
Let $\Nov$ stand for the variety of Novikov algebras as well as for the corresponding operad. Then 
$\di\Nov$ is defined by \eqref{eq:zero-id} together with 
\begin{equation}\label{eq:diLSym}
 \begin{gathered}
(x_1\dashv x_2)\dashv x_3 - x_1\dashv (x_2\dashv x_3)
= (x_2\vdash x_1)\dashv x_3 - x_2\vdash (x_1\dashv x_3),\\
(x_1\vdash x_2)\vdash x_3 - x_1\vdash (x_2\vdash x_3)
= (x_2\vdash x_1)\vdash x_3 - x_2\vdash (x_1\vdash x_3),
 \end{gathered}   
\end{equation}
\begin{equation}\label{eq:diRCom}
 \begin{gathered}
(x_1\dashv x_2)\dashv x_3 = (x_1\dashv x_3)\dashv x_2, \\ 
(x_1\vdash x_2)\dashv x_3 = (x_1\vdash x_3)\vdash x_2.
 \end{gathered}   
\end{equation}
A linear space $N$ equipped with bilinear operations 
$\vdash $ and $\dashv $ is a Novikov dialgebra if and only if 
\eqref{eq:zero-id}--\eqref{eq:diRCom} hold for all $x_1,x_2,x_3\in N$.
\end{example}

Obviously, every $\Perm$-algebra ($\di\Com$-algebra) 
with a derivation $d$ turns into a Novikov dialgebra ($\di\Nov$-algebra) relative to the new operations 
$x\vdash y = xd(y)$, 
$x\dashv y = d(y)x$.
The embedding statement may be derived by means of 
the general construction from \cite{GubKol2013} 
that allows us to reduce the problem on a Novikov 
dialgebra to an ``ordinary'' Novikov algebra
by means of conformal algebras (pseudo-algebras). 
One may refer 
to \cite{BDK2001} for details on pseudo-algebras, 
but for our puproses the only essential instance 
is that every pseudo-algebra may be turned into 
 a dialgebra \cite{Kol2008smz}.

\begin{theorem}\label{thm:diNov-Perm}
 Every Novikov dialgebra $(N,\vdash,\dashv )$ may be embedded into a differential Perm-algebra 
 in such a way that 
 \[
  x\vdash y = xd(y),\quad x\dashv y = d(y)x
 \]
for $x,y \in N$.
\end{theorem}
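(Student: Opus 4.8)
The plan is to realize the Novikov dialgebra inside a differential Perm-algebra by passing through the intermediate world of conformal (pseudo-)algebras, exactly as in the general machinery of \cite{GubKol2013}, \cite{Kol2008smz}. First I would recall the functor that sends a dialgebra to a conformal algebra: given $(N,\vdash,\dashv)$, the free $\Bbbk[\partial]$-module $\Cur N = \Bbbk[\partial]\otimes N$ carries a conformal product determined on generators by $a_{(\lambda)}b = (a\dashv b) + \lambda\,(a\vdash b)$ (or the analogous normalization), and the 0-identities \eqref{eq:zero-id} are precisely what is needed for this $\lambda$-bracket to be well defined as a polynomial in $\lambda$ of degree $\le 1$; the remaining defining relations \eqref{eq:diLSym}--\eqref{eq:diRCom} of a Novikov dialgebra translate into the conformal analogue of the Novikov identities, so that $\Cur N$ is a Novikov conformal algebra and $N$ sits inside it as the ``annihilation-at-$\lambda$'' image, i.e.\ $N\cong \Cur N/\partial(\Cur N)$ compatibly with the operations. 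This step is the one I expect to be the main obstacle: one must verify carefully that the operations $\vdash,\dashv$ are recovered from the conformal product via the standard dialgebra-from-pseudoalgebra recipe, and that the correspondence between the two identity sets is exact rather than merely implicational — but since \eqref{eq:diLSym}--\eqref{eq:diRCom} were themselves derived from the Novikov identities by the $\dot x_i$-procedure, this is bookkeeping that the cited references handle in the required generality.

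Next I would invoke the embedding of Novikov conformal algebras into differential commutative conformal algebras. A Novikov conformal algebra is, by the coefficient-algebra / affinization correspondence, the conformal shadow of a Novikov algebra, and by \cite{BCZ2018} (or equivalently the free-algebra argument via \cite{DzhLofwall2002}, \cite{SK2021_arxiv}) every Novikov algebra embeds into a differential commutative algebra $A$ with derivation $d$, with $a\circ b = a\,d(b)$. Conformalizing, $\Cur N$ embeds into the Novikov conformal algebra $\Cur A$ built from $(A,d)$, and $\Cur A$ is a differential commutative conformal algebra with the conformal product $a_{(\lambda)}b = ab + \lambda\,a\,d(b)$ coming from the associative commutative conformal product on the current conformal algebra of $A$ together with the conformal derivation induced by $d$. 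Concretely this just says: a commutative differential algebra, viewed through the current-algebra lens, is a commutative conformal algebra, and that is what underlies the whole ``$\Perm = \di\Com$'' dictionary.

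Finally I would run the dialgebra functor in reverse on the commutative/associative side: applying the pseudoalgebra-to-dialgebra functor of \cite{Kol2008smz} to the differential commutative conformal algebra $\Cur A$ yields precisely a differential Perm-algebra $P$, because $\di\Com = \Perm$ and because a conformal derivation becomes an honest derivation after taking the dialgebra of coefficients; moreover functoriality guarantees that the original $N\hookrightarrow \Cur N\hookrightarrow \Cur A$ descends to an embedding $N\hookrightarrow P$ of dialgebras, so $x\vdash y$ and $x\dashv y$ in $N$ are computed in $P$ as $x\,d(y)$ and $d(y)\,x$ respectively. Chasing the normalizations through the three functors (dialgebra $\to$ conformal, embed, conformal $\to$ dialgebra) is the only place where a sign or a swap of $\vdash$ and $\dashv$ could slip in, so I would end the proof by pinning down the convention: with $a_{(\lambda)}b = (a\dashv b) + \lambda(a\vdash b)$ on $\Cur N$ and the product $a_{(\lambda)}b = ab + \lambda\,a\,d(b)$ on $\Cur A$, the coefficient-dialgebra of $\Cur A$ has $\dashv$ given by multiplication and $\vdash$ given by $a\mapsto a\,d(b)$, matching the statement. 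Injectivity is inherited at every stage because $\Cur(-)$ is faithful and the coefficient functor recovers $N$ from $\Cur N$ exactly, which gives the embedding claimed in Theorem~\ref{thm:diNov-Perm}.
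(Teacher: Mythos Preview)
Your overall strategy---pass through the pseudo-algebra/conformal framework and invoke the embedding of Novikov algebras into differential commutative algebras---is indeed the paper's route, but the concrete mechanism you describe is not the one that works, and it contains a real gap.

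First, there is no ``dialgebra $\to$ conformal algebra'' functor given by $a_{(\lambda)}b=(a\dashv b)+\lambda(a\vdash b)$ on $\Bbbk[\partial]\otimes N$. The 0-identities \eqref{eq:zero-id} are what make a pseudo-algebra induce a well-defined dialgebra, not the other way around; they do not force your $\lambda$-product to satisfy the conformal (sesquilinear, locality, Novikov-type) axioms. Second, even if you had a ``Novikov conformal algebra'' in hand, your middle step---embedding it into a differential commutative conformal algebra ``by \cite{BCZ2018}''---does not follow, because not every conformal algebra is a current algebra $\Cur A$ for some ordinary algebra $A$; the ordinary-algebra embedding theorem does not lift automatically.

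What the paper actually does is first reduce the dialgebra to an \emph{ordinary} Novikov algebra. One sets $N_0=\mathrm{span}\{a\vdash b-a\dashv b\}$, checks that $\bar N=N/N_0$ is Novikov and that $N$ is a Novikov $\bar N$-bimodule, and forms the split null extension $\hat N=\bar N\ltimes N$, a genuine Novikov algebra. Only now does \cite{BCZ2018} apply, embedding $\hat N$ into a differential commutative algebra $(B,d)$. Then $\Cur B=H\otimes B$ (for $H=\Bbbk[T]$) is a $\Perm$-algebra carrying the derivation $\mathrm{id}\otimes d$, and $N$ sits inside it via the explicit map $a\mapsto 1\otimes\bar a+T\otimes a$, through the chain $N\subset \Cur\hat N\subset \Cur B$. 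A direct computation then checks that $\vdash$ and $\dashv$ on $N$ become $xd(y)$ and $d(y)x$ in $\Cur B$.

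So the current/pseudo-algebra construction is applied to ordinary algebras ($\hat N$ and $B$), never to $N$ itself; the split null extension $\hat N$ is precisely the missing ingredient that lets \cite{BCZ2018} enter the argument.
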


\begin{proof}
If $N $ is a Novikov dialgebra
with operations $\vdash $ and $\dashv $
then 
\[
N_0 = \mathrm{span}\,\{a\vdash b - a\dashv b \mid a,b\in N\}
\]
is an ideal in $N$ and $\bar N = N/N_0$ is a Novikov algebra. 
The space $N$ is a Novikov bimodule 
over $\bar  N $
relative to the action
\[
\bar a \circ b = a\vdash b,\quad a\circ \bar b = a\dashv b,
\]
for $a,b\in N$.
The corresponding split null 
extension $\hat N= \bar N\ltimes N$ is a Novikov algebra \cite{Pozh2009} in which $N\circ N = 0$. 

Let $H$ be a cocommutative bialgebra with a primitive element 
$T$ (e.g., $H=\Bbbk [T]$). Consider 
the current pseudo-algebra 
$\Cur\hat N = H\otimes \hat N$. 
Then $\Cur\hat N$ is a Novikov dialgebra relative to the operations  
\[
(f\otimes x)\vdash (g\otimes y) = \varepsilon(f)g\otimes x\circ y,
\quad 
(f\otimes x)\dashv (g\otimes y) = \varepsilon(g)f\otimes x\circ y
\]
for $x,y\in \hat N$, $f,g\in H$, where $\varepsilon $ is the counit in $H$.

The dialgebra $N$ embeds into $\Cur\hat N$ via
\[
a\mapsto \hat a = 1\otimes \bar a + T\otimes a,\quad a\in N.
\]

Let $(B,\cdot, d)$ be a commutative differential algebra
which contains $\hat N$ in such a way that 
\[
x\cdot d(y) = x\circ y, \quad x,y\in \hat N.
\]
Then $C = \Cur B$ may be considered as a commutative dialgebra (i.e., $\Perm $-algebra) and it is easy to check that 
$\hat d = \mathrm{id}\otimes d$ is a derivation of $C$.
Thus we have 
\[
N \subset \Cur \hat N \subset \Cur B = C,
\]
and it is easy to check that the operations
$\vdash$, $\dashv$ on $N$
are related with the operations $\cdot$, $\hat d$ on $C$ 
in the desired way. Indeed, if $a,b\in N$,
$\hat a,\hat b\in \Cur \hat N $ then the product 
of $\hat d(\hat b)$ and $\hat a$ in the $\Perm$-algebra 
$C$ may be calculated as follows:
\begin{multline*}
\hat d(\hat b)\hat a
= (1\otimes d(\bar b) + T\otimes d(b))(1\otimes \bar a + T\otimes a)
= 1\otimes d(\bar b)\cdot\bar a + T\otimes d(\bar b)\cdot a)
\\
= 1\otimes (\bar a\circ \bar b) + T\otimes (a\circ \bar b)
= 1\otimes \overline{a\dashv b} + T\otimes (a\dashv b)
=\widehat {a\dashv b}.
\end{multline*}
In a similar way, 
$\widehat {a\vdash b} = \hat a\hat d(\hat b)$
in $C$ for all $a,b\in N$.
\end{proof}

\begin{corollary}\label{cor:Dialg-embedding}
A left-symmetric algebra $(A,\circ)$ embeds into a 
differential Perm algebra in such a way 
that $a\circ b = ad(b)$ for $a,b\in A$
if and only if $(A,\circ )$ embeds into a Novikov dialgebra
in such a way that 
$a\vdash b = a\circ b$
for $a,b\in A$.
\end{corollary}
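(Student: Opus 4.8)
The plan is to read both implications off Theorem~\ref{thm:diNov-Perm} together with the elementary observation, recorded just before it, that a differential $\Perm$-algebra becomes a Novikov dialgebra under the operations $x\vdash y = xd(y)$ and $x\dashv y = d(y)x$.

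For the ``only if'' direction, suppose $(A,\circ)$ is embedded in a differential $\Perm$-algebra $(P,d)$ so that $a\circ b = ad(b)$. First one verifies that $(P,\vdash,\dashv)$, with $x\vdash y = xd(y)$ and $x\dashv y=d(y)x$, satisfies the $0$-identities \eqref{eq:zero-id} and the relations \eqref{eq:diLSym}--\eqref{eq:diRCom}; this is a routine identity-chase using left commutativity $xyz=yxz$ and the Leibniz rule, and is exactly the ``obvious'' claim preceding Theorem~\ref{thm:diNov-Perm}. Then for $a,b\in A$ we have $a\vdash b = ad(b)=a\circ b$, so $A$ is in fact closed under $\vdash$ and the inclusion exhibits $(A,\circ)$ as a subalgebra of the left-symmetric algebra underlying the Novikov dialgebra $(P,\vdash,\dashv)$, with $a\vdash b = a\circ b$ as required.

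For the ``if'' direction, suppose $(A,\circ)$ embeds into a Novikov dialgebra $(N,\vdash,\dashv)$ via an injective linear map $\iota$ with $\iota(a)\vdash\iota(b)=\iota(a\circ b)$. Apply Theorem~\ref{thm:diNov-Perm} to $N$ to obtain a differential $\Perm$-algebra $(P,d)$ containing $N$ with $x\vdash y = xd(y)$ for $x,y\in N$. Composing, $A\hookrightarrow P$, and under this map $a\circ b$ goes to $\iota(a)\vdash\iota(b)=\iota(a)\,d(\iota(b))$, i.e. $(A,\circ)$ embeds into $(P,d)$ with $a\circ b = ad(b)$.

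I do not expect any real obstacle: the corollary is a formal repackaging of Theorem~\ref{thm:diNov-Perm}, its one computational ingredient (the observation used in the first direction) already being available. The purpose of the statement is to let the rest of the paper replace the study of differential $\Perm$-algebras by the combinatorially more convenient study of Novikov dialgebras.
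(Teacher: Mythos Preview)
Your proposal is correct and matches the paper's (implicit) argument: the corollary is stated without proof immediately after Theorem~\ref{thm:diNov-Perm}, and the two directions you spell out are exactly the intended reading of that theorem together with the observation preceding it. There is nothing to add.
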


\section{Universal enveloping left-symmetric dialgebras}

According to the general rule of constructing defining identities of dialgebras, 
every left-symmetric dialgebra $(A,\vdash, \dashv)$ is a left-symmetric algebra 
relative to the operation $\vdash $. 
Thus we have a forgetful functor 
$ \di\LSym \to \LSym $. In this subsection, we explicitly construct its left adjoint functor 
$\mathcal D: \LSym \to \di\LSym $ which will be applied in the sequel. 

Let $(A,\circ )$ be a left-symmetric algebra. 
Consider the (associative) tensor algebra $T(A)$ of the space $A$ (without the identity):
\[
T(A) = A\oplus A^{\otimes 2} \oplus A^{\otimes 3} 
\oplus \dots .
\]
We will write $a_1\dots a_n$ for the element 
$a_1\otimes \dots \otimes a_n\in A^{\otimes n}$ 
for brevity. 

Define the map $M: T(A)\to A$ as follows: 
\[
 M(a_1\dots a_n) = ((\dots  ((a_1\circ a_2)\circ a_3 ) \circ \dots) \circ a_n), \quad a_i\in A.
\]
Let us define a binary operation 
$\vdash $ on the space $T(A)$ by 
induction on the length of the second factor:
\begin{equation}\label{eq:tensor-vdash}
\begin{gathered}
u\vdash a = M(u)\circ a,\quad u\in T(A),\ a\in A,\\
u\vdash (wa) = (u\vdash w)a + w (M(u)\circ a) - wM(u)a,
\end{gathered}
\end{equation}
for $u,w\in T(A)$, $a\in A$.
Finally, define a binary operation $\dashv $ on $T(A)$ as 
\begin{equation}\label{eq:tensor-dashv}
 u\dashv w = uM(w),\quad u,w\in T(A).
\end{equation}

\begin{lemma}\label{lem:Mprod}
For every $u,w\in T(A)$ we have 
\[
u\vdash w = M(u)\vdash w,
\quad 
u\dashv w = u\vdash M(w),
\]
\[
M(u\vdash w) = M(u\dashv w) = M(u)\circ M(w).
\]
\end{lemma}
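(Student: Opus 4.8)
The plan is to prove the four identities in the order: first $u\vdash w = M(u)\vdash w$, then $M(u\vdash w)=M(u)\circ M(w)$, then the statements involving $\dashv$ (which are essentially immediate from the definition \eqref{eq:tensor-dashv} once the $\vdash$-statements are known). All proofs go by induction on the length $n$ of the second factor $w$, using the two-clause inductive definition \eqref{eq:tensor-vdash} of $\vdash$.

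First I would handle $u\vdash w = M(u)\vdash w$. The base case $n=1$, i.e. $w=a\in A$, is the very definition $u\vdash a = M(u)\circ a$, and since $M(M(u))=M(u)$ (the map $M$ restricted to $A$ is the identity), this equals $M(u)\vdash a$. For the inductive step write $w=w'a$ with $w'$ of length $n-1$; expanding both $u\vdash(w'a)$ and $M(u)\vdash(w'a)$ via the second clause of \eqref{eq:tensor-vdash}, the terms $w'(M(u)\circ a) - w'M(u)a$ on the right-hand side are literally the same whether one starts from $u$ or from $M(u)$ (because $M(M(u))=M(u)$), and the remaining terms $(u\vdash w')a$ versus $(M(u)\vdash w')a$ agree by the induction hypothesis. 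So this identity is essentially bookkeeping.

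Next, $M(u\vdash w) = M(u)\circ M(w)$. Again induct on the length of $w$. For $w=a\in A$ we must check $M(M(u)\circ a) = M(u)\circ M(a) = M(u)\circ a$, which holds since $M(u)\circ a\in A$. For $w=w'a$, apply $M$ to the second clause of \eqref{eq:tensor-vdash}:
\[
M(u\vdash(w'a)) = M\big((u\vdash w')a\big) + M\big(w'(M(u)\circ a)\big) - M\big(w'M(u)a\big).
\]
Each term on the right is a product $M(\text{prefix})\circ(\text{last letter})$ by the definition of $M$; using the induction hypothesis $M(u\vdash w')=M(u)\circ M(w')$ on the first term, this becomes
\[
\big(M(u)\circ M(w')\big)\circ a + \big(M(w')\circ(M(u)\circ a)\big) - \big(M(w')\circ M(u)\big)\circ a.
\]
The last two terms form exactly minus the associator $-(M(w'),M(u),a)$, which by left-symmetry equals $-(M(u),M(w'),a) = -\big((M(u)\circ M(w'))\circ a - M(u)\circ(M(w')\circ a)\big)$. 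Substituting and cancelling leaves $M(u)\circ\big(M(w')\circ a\big) = M(u)\circ M(w'a) = M(u)\circ M(w)$, as desired. This is the one place where the left-symmetry identity $(a,b,c)=(b,a,c)$ is genuinely used, and it is the step I expect to require the most care in getting the signs and the order of arguments right.

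Finally, the $\dashv$-statements follow formally. By definition $u\dashv w = uM(w)$, and applying $M$ gives $M(u\dashv w) = M(uM(w)) = M(u)\circ M(w)$ directly from the definition of $M$ (appending the single letter $M(w)\in A$), which also matches $M(u\vdash w)$ by the previous paragraph. For $u\dashv w = u\vdash M(w)$: since $M(w)\in A$, the first clause of \eqref{eq:tensor-vdash} gives $u\vdash M(w) = M(u)\circ M(w)$; on the other hand $u\dashv w = uM(w)$ is an element of $T(A)$ whose image under... — here one has to be slightly careful, as $u\dashv w$ lies in $A^{\otimes(|u|+1)}$ whereas $u\vdash M(w)$ lies in $A$, so the claimed equality $u\dashv w = u\vdash M(w)$ should be read as an identity after the reductions already available, i.e. it holds because $u\vdash M(w) = M(u)\circ M(w) = M(u\dashv w)$ and both sides are meant modulo the identifications used throughout; I would state this precisely in line with how the lemma is used in Section~4. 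Modulo that bookkeeping, no further algebra is needed, so the whole proof reduces to the single associator manipulation in the second induction.
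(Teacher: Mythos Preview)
Your argument for $u\vdash w = M(u)\vdash w$ and for $M(u\vdash w)=M(u)\circ M(w)$ is correct and is exactly the paper's proof: induction on the length of $w$, with the single substantive step being the left-symmetry identity applied to rewrite $(M(u)\circ M(w'))\circ a - (M(w'),M(u),a)$ as $M(u)\circ(M(w')\circ a)$. Likewise $M(u\dashv w)=M(uM(w))=M(u)\circ M(w)$ is immediate, as both you and the paper note.

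The one genuine gap is in your treatment of the identity $u\dashv w = u\vdash M(w)$. You correctly observe that the two sides live in different tensor degrees ($uM(w)\in A^{\otimes(|u|+1)}$ versus $M(u)\circ M(w)\in A$), so they cannot be equal as elements of $T(A)$ when $|u|\ge 2$. But your proposed resolution---that the equality ``should be read modulo the identifications used throughout''---is incorrect: $\mathcal D(A)$ is built on the tensor algebra $T(A)$ with no relations imposed, and nothing in Section~3 or Section~4 quotients it before this lemma is applied. There are no identifications to appeal to. The displayed formula is almost certainly a typo for $u\dashv w = u\dashv M(w)$, which is the obvious statement (since $M(M(w))=M(w)$) and is the form actually used in the proof of Proposition~\ref{prop:tensor-diLS} when checking the second 0-identity. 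You should flag this as a misprint rather than invent a nonexistent quotient to justify it.
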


\begin{proof}
The desired equalities are obvious for the operation $\dashv $, as well as for $w=a\in A$. Proceed by induction
on the length of $w$:
\begin{multline}\nonumber
M(u\vdash wa) = M(u\vdash w)\circ a + M(w)\circ (M(u)\circ a)
-(M(w)\circ M(u))\circ a \\
=(M(u)\circ M(w))\circ a - (M(w),M(u),a) 
= M(u)\circ (M(w)\circ a) \\
= M(u)\circ M(wa)
\end{multline}
by the left symmetry.
\end{proof}

\begin{proposition}\label{prop:tensor-diLS}
For every left-symmetric algebra $(A,\circ)$, the space 
$T(A)$ equip\-ped with the operations \eqref{eq:tensor-vdash}, \eqref{eq:tensor-dashv}
is a left-symmetric dialgebra. 
\end{proposition}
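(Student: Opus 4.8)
The plan is to verify directly that $(T(A),\vdash,\dashv)$ satisfies the defining identities of $\di\LSym$, that is, the $0$-identities \eqref{eq:zero-id} together with the three ``dotted'' versions (one for each of $x_1,x_2,x_3$) of the left-symmetry identity $(a,b,c)=(b,a,c)$ obtained by the rule recalled before Example~\ref{exmp:diNov}. Lemma~\ref{lem:Mprod} will do most of the bookkeeping. The $0$-identities come for free: $(u\dashv v)\vdash w=(u\vdash v)\vdash w$ because both sides equal $(M(u)\circ M(v))\vdash w$, and $u\dashv(v\vdash w)=u\dashv(v\dashv w)$ because both equal $u\vdash(M(v)\circ M(w))$. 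For the dotted left-symmetry identities I would repeatedly apply $u\dashv w=u\vdash M(w)$, $u\vdash w=M(u)\vdash w$ and $M(u\vdash w)=M(u\dashv w)=M(u)\circ M(w)$; with $p:=M(u)$ and $q:=M(v)$ each of the three reduces to an instance of
\begin{equation}\label{eq:plan-star}
(p\circ q)\vdash w-p\vdash(q\vdash w)=(q\circ p)\vdash w-q\vdash(p\vdash w),\qquad p,q\in A,\ w\in T(A).
\end{equation}
The versions distinguishing $x_1$ or $x_2$ give \eqref{eq:plan-star} restricted to $w\in A$, while the version distinguishing $x_3$ — the one written purely with $\vdash$ — is \eqref{eq:plan-star} in general. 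Thus everything comes down to \eqref{eq:plan-star}.

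The key point is a closed formula for $\vdash$: I claim that $p\vdash w=pw+\delta_p(w)$ for $p\in A$ and $w\in T(A)$, where $pw$ is the concatenation product in the tensor algebra and $\delta_p$ is the unique derivation of the associative algebra $T(A)$ — which is free on the space $A$ — determined by $\delta_p(b)=p\circ b-pb$ for $b\in A$. This is proved by induction on the length of $w$ using \eqref{eq:tensor-vdash}: for $w=a\in A$ it is the identity $p\circ a=pa+(p\circ a-pa)$, and for $w=w'a$, since $M(p)=p$,
\[
p\vdash(w'a)=(p\vdash w')a+w'(p\circ a-pa)=(pw'+\delta_p(w'))a+w'\delta_p(a)=pw'a+\delta_p(w'a).
\]
In particular the operations $\vdash,\dashv$ are bilinear, and $u\vdash w=M(u)w+\delta_{M(u)}(w)$ in general by Lemma~\ref{lem:Mprod}.

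Substituting this formula into \eqref{eq:plan-star} and expanding by the Leibniz rule, one gets $p\vdash(q\vdash w)=p(qw+\delta_q(w))+\delta_p(qw+\delta_q(w))=(p\circ q)w+p\,\delta_q(w)+q\,\delta_p(w)+\delta_p\delta_q(w)$, so the left-hand side of \eqref{eq:plan-star} equals $\delta_{p\circ q}(w)-p\,\delta_q(w)-q\,\delta_p(w)-\delta_p\delta_q(w)$; subtracting the symmetric expression for the right-hand side turns \eqref{eq:plan-star} into the operator identity
\[
\delta_{p\circ q-q\circ p}=\delta_p\delta_q-\delta_q\delta_p\qquad\text{on }T(A).
\]
Both sides are derivations of $T(A)$, so it suffices to compare them on the generating space $A$, where a short computation gives $\delta_{p\circ q-q\circ p}(b)-(\delta_p\delta_q-\delta_q\delta_p)(b)=(p,q,b)-(q,p,b)$, which is $0$ by the left symmetry of $A$. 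That settles all the defining identities of $\di\LSym$.

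I expect the only real difficulty to be organizational: writing down the dotted left-symmetry identities of $\di\LSym$ correctly and tracking, via Lemma~\ref{lem:Mprod}, that all of them funnel into the single relation \eqref{eq:plan-star}. Once the formula $p\vdash w=pw+\delta_p(w)$ is in hand, \eqref{eq:plan-star} becomes a formal consequence of the fact that $p\mapsto\delta_p$ intertwines the commutator $[p,q]=p\circ q-q\circ p$ with the commutator of derivations of $T(A)$ — which is, once again, nothing but the left symmetry of $A$.
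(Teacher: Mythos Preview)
Your argument is correct. The verification of the $0$-identities via Lemma~\ref{lem:Mprod} is the same as in the paper, and your reduction of the two identities \eqref{eq:diLSym} (the three dotted versions, two of which coincide modulo \eqref{eq:zero-id}) to the single relation \eqref{eq:plan-star} is accurate.

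The difference lies in how \eqref{eq:plan-star} is established. The paper proceeds by a direct induction on the length of $w$: it expands $(a\circ b)\vdash wc - a\vdash(b\vdash wc)$ term by term from the recursion \eqref{eq:tensor-vdash}, strips off the summands already symmetric in $a,b$, and checks that what remains is symmetric as well. You instead observe the closed formula $p\vdash w = pw + \delta_p(w)$ with $\delta_p\in\Der(T(A))$ determined by $\delta_p(b)=p\circ b - pb$, and recast \eqref{eq:plan-star} as the statement that $p\mapsto\delta_p$ takes the commutator $p\circ q-q\circ p$ to the commutator of derivations. Since both sides are derivations of the free associative algebra $T(A)$, equality on the generating space $A$ suffices, and there it is literally the left-symmetry identity. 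Your route trades the paper's explicit term-chasing for a structural observation (a Lie map from $(A,[\,,\,])$ to $\Der(T(A))$); it is shorter and arguably more illuminating, while the paper's computation has the virtue of staying entirely within the recursion \eqref{eq:tensor-vdash} without introducing auxiliary objects.
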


\begin{proof}
It is enough to check that 
\eqref{eq:zero-id} and \eqref{eq:diLSym} hold 
on $(T(A),\vdash,\dashv)$. 

Let us start with \eqref{eq:zero-id}.
Consider $u,v,w\in T(A)$ and note that
\begin{multline}\nonumber
 (u\vdash v)\vdash w  = 
 M(u\vdash v)\vdash w \\
 = (M(u)\circ M(v))\vdash w =  M(u\dashv v)\vdash w
 = (u\dashv v)\vdash w 
\end{multline}
by Lemma \ref{lem:Mprod}.
The remaining 0-identity holds by similar reasons.
 
Next, proceed to \eqref{eq:diLSym}. Let us state the calculation 
for the more complicated case: show that the ``right'' associator
\[
(u\vdash v)\vdash w - u\vdash (v\vdash w) 
\]
is symmetric relative to the exchange of $u$ and $v$.
Indeed, for $w=c\in A$ it is enough to apply Lemma~\ref{lem:Mprod}
and the left symmetry of $A$. For the general case, apply induction 
on the length of $w$. Since we may replace $u$ and $v$ with 
$M(u)$ and $M(v)$ by Lemma~\ref{lem:Mprod},
consider $u=a$, $v=b$ ($a,b\in A$):
\begin{multline}\nonumber
(a\vdash b)\vdash wc - a\vdash (b\vdash wc) \\
= ((a\circ b)\vdash w )c + w((a\circ b)\circ c) - w(a\circ b)c \\
- a\vdash ( (b\vdash w)c+w(b\circ c)-wbc) \\
=  ((a\circ b)\vdash w )c + w((a\circ b)\circ c) - w(a\circ b)c \\
-(a\vdash (b\vdash w))c - (b\vdash w)(a\circ c) + (b\vdash w)ac \\
-(a\vdash w)(b\circ c) -w(a\circ (b\circ c)) + wa(b\circ c) \\
+ (a\vdash wb)c + wb(a\circ c) - wbac .
\end{multline}
Let us remove those summands that already form symmetric expressions (relative 
to the exchange of $a$ and $b$) and expand the last $(a\vdash wb)$: 
\begin{multline}\nonumber
 - w(a\circ b)c - (b\vdash w)(a\circ c) + (b\vdash w)ac -(a\vdash w)(b\circ c) + wa(b\circ c) \\
  + (a\vdash w)bc + w(a\circ b)c - wabc + wb(a\circ c)
- wbac \\
=
(a\vdash w)bc  + (b\vdash w)ac 
-(a\vdash w)(b\circ c) - (b\vdash w)(a\circ c) \\
+ wa(b\circ c) + wb(a\circ c)
- wbac  - wabc .
\end{multline}
The expression obtained has the desired symmetry.

The remaining relation from \eqref{eq:diLSym} can be proved in a similar but simpler way.
\end{proof}

Denote the left-symmetric dialgebra $(T(A),\vdash ,\dashv)$ by $\mathcal D(A)$. 
The injection $\iota : A\to \mathcal D(A)$, $\iota(a)=a$, is a homomorphism  from $A$ to 
$\mathcal D(A)^\circ =(\mathcal D(A), \vdash ) $
by the definition. 

\begin{proposition}\label{prop:NormalU-LS}
Let $(A,\circ )$ be a left-symmetric algebra.
Then for every left-symmetric dialgebra $B$ and for every homomorphism $\tau : A\to B^\circ $ 
there exists unique homomorphism of dialgebras $\varphi : \mathcal D(A)\to B$
such that $\varphi \iota = \tau $. 
\end{proposition}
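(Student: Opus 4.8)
The plan is to prove the universal property in the standard way: define $\varphi$ on the generating space $A \subset T(A)$ by $\varphi(a) = \tau(a)$, extend it to all of $T(A)$ by a formula that is forced by requiring $\varphi$ to respect $\dashv$, then verify that the resulting map is indeed a homomorphism for both $\vdash$ and $\dashv$, and finally check uniqueness. The key point is that every element of $T(A) = \mathcal D(A)$ is a $\dashv$-product of elements of $A$: indeed, using $u \dashv a = uM(a) = ua$ for $a \in A$, one sees by induction on length that the monomial $a_1 \dots a_n$ equals $(\dots((a_1 \dashv a_2) \dashv a_3)\dashv \dots )\dashv a_n$ in $\mathcal D(A)$. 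Hence, if $\varphi$ is to be a dialgebra homomorphism extending $\tau$, it must be given by
\[
\varphi(a_1 \dots a_n) = (\dots((\tau(a_1) \dashv \tau(a_2)) \dashv \tau(a_3)) \dashv \dots) \dashv \tau(a_n),
\]
which already settles uniqueness and gives a well-defined linear map $\varphi : T(A) \to B$.

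It remains to check that this $\varphi$ is a homomorphism, i.e. $\varphi(u \dashv w) = \varphi(u) \dashv \varphi(w)$ and $\varphi(u \vdash w) = \varphi(u) \vdash \varphi(w)$ for all $u,w \in T(A)$. First I would record an auxiliary identity on the target side: in any left-symmetric dialgebra $B$, using the 0-identities \eqref{eq:zero-id} and induction on $n$, one has $b_1 \vdash (b_2 \dashv b_3 \dashv \dots \dashv b_n) = b_1 \vdash b_2 \vdash b_3 \vdash \dots$ appropriately, and more usefully that $\varphi(M(u))$ and $\varphi(u)$ act identically on the right under both $\vdash$ and $\dashv$ — this is the $B$-side shadow of Lemma~\ref{lem:Mprod}. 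Concretely, I expect to prove $\varphi(M(u)) \vdash b = \varphi(u) \vdash b$ and $b \dashv \varphi(M(u)) $-type relations directly from \eqref{eq:zero-id} and the fact that $\tau$ is a homomorphism $A \to B^\circ$, which forces $\varphi(M(a_1 \dots a_n)) = M_B(\tau(a_1)\dots\tau(a_n))$ where $M_B$ is the analogous iterated $\vdash$-product in $B$. The $\dashv$-compatibility $\varphi(u \dashv w) = \varphi(u) \dashv \varphi(w)$ then follows by writing $w$ as an iterated $\dashv$-product of elements of $A$ and peeling off factors one at a time, using associativity of $\dashv$ on $B$ (which holds: $\dashv$ in a dialgebra is right-nothing but the relations give $(u\dashv v)\dashv w = u \dashv (v \dashv w)$ via the 0-identities and \eqref{eq:diLSym}), together with $\varphi(ua) = \varphi(u) \dashv \tau(a)$, which is the definition.

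The genuinely laborious step — and the main obstacle — is $\vdash$-compatibility, $\varphi(u \vdash w) = \varphi(u) \vdash \varphi(w)$. I would prove it by induction on the length of $w$, exactly mirroring the inductive definition \eqref{eq:tensor-vdash}. The base case $w = a \in A$ reads $\varphi(u \vdash a) = \varphi(M(u) \circ a) = \varphi(M(u)) \vdash \tau(a) = \varphi(u) \vdash \varphi(a)$, using that $\tau$ preserves $\circ$ and the $M$-vs-identity relation on $B$. For the inductive step $w \to wa$, one applies $\varphi$ to the right-hand side of the second line of \eqref{eq:tensor-vdash}, namely to $(u \vdash w)a + w(M(u)\circ a) - wM(u)a$; each of the three terms is a tensor monomial, so $\varphi$ sends it to an iterated $\dashv$-product, and one must show that the resulting combination collapses to $\varphi(u) \vdash \varphi(wa)$ in $B$. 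This is where one uses the defining relations of $B$ — the 0-identities and \eqref{eq:diLSym} — to manipulate the mixed $\vdash$/$\dashv$ expressions; the computation is essentially the one already carried out inside the proof of Proposition~\ref{prop:tensor-diLS}, now transported to $B$ via $\varphi$ and the induction hypothesis. I expect no new idea is needed beyond patience: the identity \eqref{eq:tensor-vdash} was reverse-engineered precisely so that this collapse works in any left-symmetric dialgebra, so the same algebraic cancellations apply. Finally, $\varphi \iota = \tau$ holds by construction, completing the proof.
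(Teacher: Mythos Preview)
Your approach is essentially the paper's: define $\varphi$ on monomials by iterated $\dashv$, deduce uniqueness, prove the $M$-replacement identities $\varphi(u)\vdash b = \tau(M(u))\vdash b$ and $b\dashv \varphi(w) = b\dashv \tau(M(w))$ from \eqref{eq:zero-id}, then verify $\vdash$-compatibility by induction on the length of~$w$. All of that is fine.

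There is one genuine slip. You assert that $\dashv$ is associative in a left-symmetric dialgebra, claiming this follows from \eqref{eq:zero-id} and \eqref{eq:diLSym}. It does not: taking any left-symmetric algebra $(B,\circ)$ and setting $\vdash = \dashv = \circ$ gives a left-symmetric dialgebra in which the $\dashv$-associator is the usual (generally nonzero) associator. In fact $\dashv$ is not even associative in $\mathcal D(A)$ itself, since $(u\dashv v)\dashv w = uM(v)M(w)\in A^{\otimes(n+2)}$ while $u\dashv(v\dashv w)=u(M(v)\circ M(w))\in A^{\otimes(n+1)}$ for $u\in A^{\otimes n}$. So the ``peel off factors using associativity'' argument for $\dashv$-compatibility fails.

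The fix is already in your hands: you correctly stated the relation $b\dashv \varphi(w)=b\dashv \tau(M(w))$. Use it directly. Since $u\dashv w = uM(w)$ with $M(w)\in A$, the definition of $\varphi$ gives $\varphi(u\dashv w)=\varphi(u)\dashv \tau(M(w))$; and your $M$-replacement relation gives $\varphi(u)\dashv\varphi(w)=\varphi(u)\dashv\tau(M(w))$. That is exactly what the paper does, and it needs no associativity.
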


\begin{proof}
It is enough to check that the map 
\begin{equation}\label{eq:diLS-normal}
\varphi(a_1a_2\dots a_{n-1}a_n) = 
(\dots  ((x_1\dashv x_2) \dashv \dots \dashv x_{n-1}) \dashv  x_n),\quad x_i=\tau(a_i),
\end{equation}
is a homomorphism of dialgebras preserving $\vdash $ and $\dashv $.
The latter follows from the following observation:
for all $w\in T(A)$, $x\in B$ we have
\[
\varphi (w)\vdash x = \tau(M(w))\vdash x,
\quad 
x\dashv \varphi (w) = x\dashv \tau(M(w))
\]
by \eqref{eq:zero-id}.
It remains to prove 
\[
\varphi (a\vdash w)  =\tau(a)\vdash \varphi (w),\quad a\in A,
\]
by induction on the length of $w\in T(A)$, 
and apply Lemma~\ref{lem:Mprod}.
\end{proof}

Thus $\mathcal D(A) $ is the universal enveloping left-symmetric dialgebra
of of a left-symmetric algebra $A$. 
As we can see, every $A$ embeds into its universal dialgebra envelope 
which is just $T(A)$ as a linear space.
The functor $\mathcal D(\cdot )$ is left adjoint to the forgetful functor $\di\LSym \to \LSym$ which is induced by the morphism 
of operads $x_1\circ x_2 \mapsto x_1\vdash x_2$.

Note that the same sort of functor acts on $\di\Nov$: every 
Novikov dialgebra is in particular a left-symmetric algebra 
relative to the operation $\vdash $. So one may construct 
a universal enveloping Novikov dialgebra for a left-symmetric 
algebra $A$, but the problem is to distinguish 
those algebras that are embedded into such envelopes.
We will completely resolve this problem in Section~\ref{sec:Crit}.

\section{A linear basis of the free SLS-algebra}\label{sec:BasisFree}

Let $P$ be a Perm-algebra with a derivation $d$. 
As above, denote $d(x)$ by $x'$ for $x\in P$. 
A new binary operation 
\begin{equation}\label{eq:DiffProd}
x\circ y = xy', \quad x,y\in P,    
\end{equation}
turns $P$ into a left-symmetric algebra. Moreover, 
the following identities hold on $P$:
\begin{gather}
    ((x\circ y)\circ z)\circ u = ((x\circ z)\circ y)\circ u ,\label{eq:SLS-1} \\
    (x,y\circ z, u) = (y,x\circ z, u), \label{eq:SLS-2}, 
\end{gather}
where $(a,b,c)$ stands for the associator 
$(a\circ b)\circ c - a\circ (b\circ c)$.

\begin{definition}\label{defn:SLS-algebra}
A left-symmetric algebra $A$ satisfying 
\eqref{eq:SLS-1} and \eqref{eq:SLS-2} is said to be an {\em SLS-algebra}.
\end{definition}

Let $X$ be a nonempty set.
In this section, we prove that the free SLS-algebra $\SLS\langle X\rangle $ embeds into the free differential Perm-algebra 
$\Perm\Der \langle X,d\rangle $ relative to the operation 
\eqref{eq:DiffProd}. 
Namely, define a homomorphism 
\[
\tau :\SLS\langle X\rangle\to \Perm\Der\langle X,d\rangle 
\]
such that 
\[
\tau (x)=x,\ x\in X, \quad \tau (f\circ g) = \tau(f)\tau(g)'.
\]
The main purpose of this section is to prove that $\tau $ is injective.

A Novikov algebra is in particular an SLS-algebra. Hence, there exists a homomorphism 
\[
\phi : \SLS \langle X \rangle  \to \Nov \langle X\rangle . 
\]
Choose a monomial basis $\bar N$ in $\Nov \langle X\rangle $
and for every $u\in \bar N$ choose its monomial pre-image in $\SLS\langle X\rangle $.
Denote the set of such pre-images by $N$. There exists a monomial basis $B$
of $\SLS \langle X \rangle $ such that $N\subset B$. Let us choose and fix 
such a set~$B$.

Every non-associative word $u$ in $X$ can be written as 
\begin{equation}\label{eq:RNform}
u = L(u_1,\dots, u_k, x):=u_{1}\circ (u_{2} \circ ( \dots \circ (u_{k}\circ x)\dots )),
\end{equation}
where $u_{j}$ are non-associative words, $x\in X$.
Let $k$ be the {\em length\/} of $u$.
For a linear combination of \eqref{eq:RNform}, the length 
is the maximal length of its summands.

\begin{lemma}\label{lem:Lemma2}
Every element of $\SLS\langle X\rangle $
may be written as a linear combination 
of words of the form \eqref{eq:RNform} with $u_j\in N$, $x\in X$.
\end{lemma}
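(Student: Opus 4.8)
The plan is to argue by induction on the length $k$ of the non-associative word $u$ and to show that any $L(u_1,\dots,u_k,x)$ can be rewritten modulo the SLS-identities so that every slot $u_j$ lies in the chosen set $N$ of pre-images of Novikov monomials. The base case $k=0$ (that is, $u = x\in X$) is trivial, since then $u\in N$ already. For the inductive step, I would first apply the induction hypothesis to each $u_j$ separately: expressing $u_j$ as a linear combination of words of the form \eqref{eq:RNform} with all inner slots already in $N$, and then using multilinearity to reduce to the case where the ``payload'' of each $u_j$ is such a word. The point of the reduction is to be able to treat $u_j$ as if it were a monomial built from $N$-letters, so that the only remaining issue is the top-level letter occupying the $j$-th position of $L(u_1,\dots,u_k,x)$.

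The heart of the argument is then to show: if $w\in\SLS\langle X\rangle$ is a word all of whose proper subwords can be put in the desired $N$-form, and $w$ itself is $L(u_1,\dots,u_k,x)$ with the $u_j$ in $N$-form but where some $u_j\notin N$, then $w$ can be rewritten as a combination of shorter-or-equal words with all slots in $N$. Here I would use that, by the choice of $B$ and $N$, the image $\phi(u_j)\in\Nov\langle X\rangle$ is a linear combination of the Novikov monomials $\bar N$; lifting this back, $u_j$ equals a linear combination of elements of $N$ modulo $\Ker\phi$, i.e.\ modulo words whose Novikov image vanishes. So the task becomes: control what happens when one substitutes, into the $j$-th slot of $L(u_1,\dots,u_k,x)$, an element of $\Ker\phi$. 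The identities \eqref{eq:SLS-1} and \eqref{eq:SLS-2} are precisely the tools for this — \eqref{eq:SLS-1} lets one permute consecutive ``left'' slots $u_{j}, u_{j+1}$ past each other when they sit over a common tail (modulo associator corrections), and \eqref{eq:SLS-2} governs the associator corrections themselves, allowing the inner argument of a slot to be swapped with another slot. Iterating these, together with left-symmetry $(a,b,c)=(b,a,c)$ to reorganize associators, one should be able to push any $\Ker\phi$-contribution either into a strictly shorter word or into a position where it is manifestly zero.

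Concretely, I expect the mechanism to be: write $u_j = \sum c_i n_i + r$ with $n_i\in N$ and $r\in\Ker\phi$; substituting the $\sum c_i n_i$ part is already in the desired form; for the $r$-part, use that $r$, being in the kernel of the map to the free Novikov algebra, is a combination of the ``non-Novikov'' relators — i.e.\ consequences of right-commutativity $(a,b,c)$-type differences that are \emph{not} consequences of left-symmetry alone. These relators, when placed in a non-final slot of \eqref{eq:RNform} and acted on by the outer left-multiplications, collapse via \eqref{eq:SLS-1}--\eqref{eq:SLS-2}: the relator $((x\circ y)\circ z)\circ u - ((x\circ z)\circ y)\circ u$ is exactly the defect of right-commutativity one level down, and it is killed precisely because \eqref{eq:SLS-1} is an identity of $\SLS$. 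So the $r$-part either vanishes outright or gets rewritten into shorter words, and the induction on length closes.

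The main obstacle I anticipate is bookkeeping in the inductive step: when one slot $u_j$ is replaced, the identities \eqref{eq:SLS-1} and \eqref{eq:SLS-2} do not act ``locally'' on that slot alone — they entangle $u_j$ with its neighbors and with the inner argument $u_{j+1}\circ(\dots)$, producing a sum of terms some of which are in \eqref{eq:RNform} form of the same length but with a different, not-yet-reduced slot, and others which are genuinely shorter. One must therefore set up a careful well-ordering — for instance, lexicographic on (length, number of non-$N$ slots, position of the leftmost non-$N$ slot) — and check that every rewriting step strictly decreases it. Verifying that \eqref{eq:SLS-1}, \eqref{eq:SLS-2}, and left-symmetry together suffice to realize each such step, without ever needing an identity that fails in $\SLS$, is the delicate part; everything else is routine multilinear expansion.
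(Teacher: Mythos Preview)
Your core idea is right---replace each slot $u_j$ by an $N$-linear combination modulo $\Ker\phi$ and argue that the $\Ker\phi$-part contributes nothing---but you have not isolated the single observation that makes all of your anticipated bookkeeping evaporate. The paper's proof is two sentences: from \eqref{eq:SLS-1} together with left symmetry one checks that the set
\[
\{\,r\in\SLS\langle X\rangle : r\circ u=0\ \text{for all}\ u\,\}
\]
is a two-sided ideal (for the left-ideal closure, apply $(v,r,u)=(r,v,u)$ and use $r\circ(\cdot)=0$), and it contains every substitution instance of the right-commutativity relator by \eqref{eq:SLS-1} itself; hence it contains all of $\Ker\phi$. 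In other words, $\phi(a)=\phi(b)$ implies $a\circ u=b\circ u$ \emph{identically}, not merely modulo shorter words.

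Once you have this, there is nothing to induct on and no well-ordering to set up. In
\[
L(u_1,\dots,u_k,x)=u_1\circ\bigl(u_2\circ(\cdots\circ(u_k\circ x)\cdots)\bigr)
\]
each $u_j$ occurs as the left factor of a $\circ$, so you may replace it outright by any element with the same $\phi$-image---in particular by the $N$-linear combination representing $\phi(u_j)$---and the resulting equality is exact. Expanding multilinearly finishes the lemma in one step. Your worries about entanglement with neighboring slots, shorter-word correction terms, and lexicographic descent all stem from treating \eqref{eq:SLS-1} as a local rewriting rule on the ambient word rather than packaging it as the closed-form vanishing statement $(\Ker\phi)\circ u=0$. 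The inner induction you propose on the structure of each $u_j$ is likewise unnecessary: nothing about the form of $u_j$ is used beyond its Novikov image.
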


\begin{proof}
It follows from \eqref{eq:SLS-1} that if 
$\phi (a)=\phi(b)$ then 
$a\circ u = b\circ u$ for all 
$a,b,u\in \SLS\langle X\rangle $.
This observation makes the lemma obvious.
\end{proof}

There exist various presentations of an element $f\in \SLS\langle X\rangle $ 
by linear combinations of \eqref{eq:RNform} as in Lemma~\ref{lem:Lemma2}.
However, for every nonzero $f$ we may choose a presentation 
with minimal length. In this way, we obtain a well-defined 
length function 
\[
\ell :\SLS\langle X\rangle\setminus \{0\} \to \mathbb Z_+
\]
which depends only on the choice of~$B$.

Recall the notion of {\em weight\/} in a 
free commutative differential algebra 
\cite{DzhLofwall2002}. 
Let $X^{(\omega )}$ stand for the disjoint union 
$X\cup X' \cup X'' \cup\dots $, then $\Com\Der\langle X,d\rangle $
as a commutative algebra coincides with 
the polynomial algebra $\Com\langle X^{(\omega )}\rangle $. 
Define 
\[
\wt(x^{(j)}) =j-1,\quad x\in X,\ j\ge 0, 
\]
and set $\wt(uv) = \wt(u)+\wt(v)$ 
for all monomials $u,v\in \Com\langle X^{(\omega )}\rangle $.

As shown in \cite{DzhLofwall2002}, the subspace $W$ spanned in 
$\Com\langle X^{(\omega )}\rangle$ 
by all monomials of weight~$-1$ is isomorphic to 
the free Novikov algebra $\Nov\langle X\rangle $
relative to the map 
\[
\Nov\langle X\rangle \to \Com \langle X^{(\omega )}\rangle 
\]
defined in the same way as~$\tau $.
We will not distinguish notations for the maps 
presenting horizontal arrows in the diagram below.
\[
\begin{CD}
\SLS\langle X\rangle  @>\tau >> \Perm\langle X^{(\omega)}\rangle \\
@V\phi VV @VVV \\
\Nov\langle X\rangle @>\tau >> \Com\langle X^{(\omega)}\rangle
\end{CD}
\]

Hence, for every monomial $u\in W$
we may find its unique pre-image 
$\tau^{-1}(u) \in \Nov \langle X\rangle $, 
write $\tau^{-1}(u)$ as a linear combination of $N$, and consider 
the same combination as an element of $\SLS\langle X\rangle $ since 
$N\subset B$. Thus we obtain a well-defined map
$\tau^{-1}:W\to \Bbbk N \subset \SLS\langle X\rangle $ which depends only 
on the choice of~$B$.

If $u\in \SLS\langle X\rangle $ 
is a monomial of the form \eqref{eq:RNform}
then 
\[
 \tau(u) = 
 \tau(u_{1})\tau (u_{2}) \dots \tau(u_{k}) x^{(k)} + g_u
 \in \Perm \langle X^{(\omega )} \rangle 
\]
where all monomials in $g_u$ end with $x^{(j)}$ for $j<k$.

Recall that a linear basis of 
$\Perm\langle X^{(\omega )}\rangle $ 
consists of all monomials of the form $ux^{(j)}$,
where $j\ge 0$, $x\in X$, and $u$ is from a linear basis 
of the free associative commutative algebra $\Com\langle X^{(\omega )}\rangle $ (see, e.g., \cite{Chapoton01}). 
In particular, associative words with different last letters are linearly 
independent in the free Perm-algebra.

Assume $\Ker\tau \ne 0$ in $\SLS\langle X\rangle $. 
Choose a nonzero element $f\in \Ker\tau $
of minimal length, $\ell (f) = k$. 
Without loss of generality we may suppose that all monomials 
in $f$ in a presentation given by Lemma~\ref{lem:Lemma2}
end with the same letter $x\in X$:
\[
f = \sum\limits_{j}\alpha _j 
L(u_{1j}, u_{2j}, \dots ,u_{kj}, x)
+ \tilde f,
\]
where $\ell (\tilde f)<k$ or $\tilde f=0$. 
Then 
\[
\tau(f) = \sum\limits_{j}\alpha _j\tau(u_{1j}) \dots \tau(u_{kj}) x^{(k)}
+ h,
\]
where all monomials in $h$ end with $x^{(j)}$ for $j<k$.
Hence, 
\[
\sum\limits_{j}\alpha _j\tau(u_{1j}) \dots \tau(u_{kj}) = 0
\]
in $\Com\Der\langle X,d\rangle = \Com\langle X^{(\omega )}\rangle $.

The following statement leads us to a contradiction 
with the choice of~$f$.

\begin{proposition}\label{prop:Prop3}
Let 
\[
F = \sum\limits_{j}\alpha _j w_{1j}\otimes  \dots\otimes  w_{kj} \in W^{\otimes k}, \quad \alpha_j\in \Bbbk ,
\]
turn into zero by multiplication, i.e., 
\[
\mu(F) := \sum\limits_{j}\alpha _j w_{1j} \dots w_{kj} = 0\in \Com\langle X^{(\omega )} \rangle .
\]
Then $\ell (g)<k$ for 
\[
g = \sum\limits_{j}\alpha _j 
L(u_{1j}, u_{2j}, \dots , u_{kj}, x),
\]
where $u_{ij}=\tau^{-1}(w_{ij})$, $x\in X$.
\end{proposition}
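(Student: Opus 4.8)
The plan is to reduce the statement to a fact about the free Novikov algebra, where the analogous "multiplication kills the tensor $\Rightarrow$ the iterated left-product drops in length" phenomenon is already controlled by the weight grading of \cite{DzhLofwall2002}. First I would apply the homomorphism $\phi:\SLS\langle X\rangle\to\Nov\langle X\rangle$ to $g$. Since $u_{ij}=\tau^{-1}(w_{ij})\in\Bbbk N$ and $\tau\phi=\tau$ on the bottom row of the commutative diagram, the image $\phi(g)=\sum_j\alpha_j L(\bar u_{1j},\dots,\bar u_{kj},x)$ in $\Nov\langle X\rangle$ maps under $\tau$ to $\sum_j\alpha_j w_{1j}\cdots w_{kj}x^{(k)}+(\text{lower last index})$, whose top component is $\mu(F)\cdot x^{(k)}=0$. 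Because associative words with distinct last letters are linearly independent in $\Perm\langle X^{(\omega)}\rangle$, and because $\tau$ on $\Nov\langle X\rangle$ is the isomorphism onto the weight-$(-1)$ subspace $W$, it follows that $\phi(g)=0$ in $\Nov\langle X\rangle$; that is, $g$ already lies in $\Ker\phi$.

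The second step is to exploit $\Ker\phi$. By Lemma~\ref{lem:Lemma2} and identity \eqref{eq:SLS-1}, whenever $\phi(a)=\phi(b)$ we may replace $a$ by $b$ inside any left-product $a\circ u$; iterating, if $\phi(a)=0$ then $a\circ u=0$. So $g\in\Ker\phi$ does not by itself force $g=0$, but it does mean every monomial of $g$ can, after rewriting via \eqref{eq:SLS-1}, have its \emph{outer} factors pushed into combinations from $N$ that cancel — and crucially such cancellations take place \emph{without lengthening}: \eqref{eq:SLS-1} only permutes the $u_1,\dots,u_{k-1}$ among themselves and \eqref{eq:SLS-2} rearranges an associator block, neither of which increases the number of $\circ$-nestings past the innermost $x$. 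The claim to establish is therefore: if $g$, written as a length-$k$ combination of $L$-words with all $u_{ij}\in N$, maps to $0$ under $\phi$, then in fact $g$ can be re-expressed as a combination of $L$-words of length $<k$, i.e. $\ell(g)<k$. This is the inductive heart of the argument.

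For that I would argue as follows. Work in $\Nov\langle X\rangle$ with its monomial basis $\bar N$. The relation $\mu(F)=0$ is a relation among products $w_{1j}\cdots w_{kj}$ of weight-$(-1)$ elements; by the structure of $\Com\langle X^{(\omega)}\rangle$ such a relation is a $\Bbbk$-linear combination of "obvious" relations coming from commutativity of the $w$'s and from the multiplication table of $W$ inside $\Com\Der$. Pulling these obvious relations back through $\tau^{-1}$ and lifting to $\SLS\langle X\rangle$: commutativity of two adjacent factors $w_{ij},w_{i+1,j}$ corresponds, via \eqref{eq:SLS-1}, to swapping the corresponding $u$'s in the $L$-word with no change of length; and a relation expressing one weight-$(-1)$ product as another uses the Novikov identity, whose failure in $\SLS\langle X\rangle$ is exactly measured by the associator terms in \eqref{eq:SLS-2}, which (being associators $(y,x\circ z,u)$ with the troublesome bracketing absorbed) contribute only strictly shorter $L$-words. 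Assembling: modulo words of length $<k$, the combination $g$ is congruent to a combination whose top-weight image is $\mu(F)=0$, hence $g$ itself is congruent to $0$ modulo length $<k$; this is precisely $\ell(g)<k$. A clean way to package the bookkeeping is induction on $k$: peel off the outermost factor $u_{1j}$, use \eqref{eq:SLS-2} to move it inward past one layer, reducing to an instance of the Proposition with a shorter tensor (and an extra $x$-like slot), and invoke the inductive hypothesis together with Lemma~\ref{lem:Lemma2}.

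The main obstacle, and the place where care is required, is the precise claim that the relations defining $\Ker\mu$ on $W^{\otimes k}$ lift to $\SLS\langle X\rangle$ with only length-decreasing error terms. Commutativity lifts cleanly via \eqref{eq:SLS-1}; the subtle part is the Novikov-type relations, where one must check that rewriting $\tau^{-1}$ of a weight-$(-1)$ product of two basis elements of $W$ into the chosen basis $N$ produces, in $\SLS\langle X\rangle$, only a correction supported on $L$-words of smaller length — this is where \eqref{eq:SLS-2} and the left-symmetry are used, and where the choice of $B\supset N$ matters. Everything else is routine diagram-chasing and induction.
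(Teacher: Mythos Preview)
Your first paragraph contains a genuine error: the claim that $\phi(g)=0$ in $\Nov\langle X\rangle$ is false. You correctly observe that the $x^{(k)}$-component of $\tau(\phi(g))$ vanishes, but the lower-index terms need not. For a concrete instance take $k=2$, $F=w_1\otimes w_2-w_2\otimes w_1$; then $g=u_1\circ(u_2\circ x)-u_2\circ(u_1\circ x)=(u_1\circ u_2-u_2\circ u_1)\circ x$, which has $\ell(g)\le 1$ as required, yet $\phi(g)=(\bar u_1\circ\bar u_2-\bar u_2\circ\bar u_1)\circ x$ is generically nonzero in $\Nov\langle X\rangle$. Since your second paragraph builds on $g\in\Ker\phi$, the reduction you propose there does not get off the ground.

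Your third paragraph is closer to what actually works, but it stays too vague at the crucial point. The paper's proof rests on two concrete ingredients you do not identify. First, the kernel of the multiplication map $W^{\otimes k}\to\Com\langle X^{(\omega)}\rangle$ is spanned by tensors of the specific shape
\[
w_1\otimes\cdots\otimes w_{j-1}\otimes(ab\otimes cd-cb\otimes ad)\otimes w_{j+1}\otimes\cdots\otimes w_k,
\]
with all factors of weight $-1$; mere adjacent commutativity is not enough, and your ``Novikov-type relations'' are never made precise. Second, one must prove that for such a generator the corresponding difference of $L$-words has length $<k$. This reduces (modulo shorter terms, via left symmetry) to the associator identity
\[
(\tau^{-1}(ab),\tau^{-1}(cd),u)=(\tau^{-1}(cb),\tau^{-1}(ad),u),
\]
which in turn is established by writing $\tau^{-1}(ab)=R(x,u_1,\dots,u_m)$, $\tau^{-1}(cd)=R(y,v_1,\dots,v_l)$ and proving by induction on $m,l$, using \eqref{eq:SLS-1} and \eqref{eq:SLS-2} together, that
\[
(R(x,u_1,\dots,u_m),R(y,v_1,\dots,v_l),u)=(R(y,u_1,\dots,u_m),R(x,v_1,\dots,v_l),u).
\]
This induction is the technical heart of the argument; your sketch (``the Novikov identity, whose failure is measured by \eqref{eq:SLS-2}'') does not supply it. Also note that swapping adjacent $u_i,u_{i+1}$ in an $L$-word is handled by left symmetry, not by \eqref{eq:SLS-1}, and it produces a length-decreasing associator rather than lifting ``cleanly''.
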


Before we proceed to the proof, let us state two examples to 
explain the meaning of Proposition~\ref{prop:Prop3}.

\begin{enumerate}
\item 
$F = w_1\otimes w_2-w_2\otimes w_1\in W^{\otimes 2}$ 
turns into zero by multiplication. 
Then for $u_i= \tau^{-1} (w_i)\in \Bbbk N\subset \SLS\langle X\rangle  $, $i=1,2$, 
we have 
\[
u_1\circ (u_2\circ x) - u_2\circ (u_1\circ x) 
= (u_1\circ u_2)\circ x - (u_2\circ u_1)\circ x
\]
by left symmetry. The right-hand side is of length $\le 1$ 
which is smaller than $k=2$.

\item 
$F = w_1\otimes w_2w_3' - w_2\otimes w_1w_3' \in W^{\otimes 2}$ turns 
into zero by multiplication (here $w_i \in W$). 
Then for 
$u_i=\tau^{-1}(w_i)$,
we have 
$u_i\circ u_3\equiv \tau^{-1}(w_iw'_3)\pmod {\Ker\phi} $ 
by definition, and thus
\begin{multline*}
    \tau^{-1}(w_1)\circ( \tau^{-1}(w_2w_3')\circ x) - 
\tau^{-1}(w_2)\circ( \tau^{-1}(w_1w_3')\circ x)\\
=
u_1\circ ((u_2\circ u)\circ x) - u_2\circ ((u_1\circ u)\circ x) 
=
(u_1\circ (u_2\circ u))\circ x - (u_2\circ (u_1\circ u))\circ x
\end{multline*}
by \eqref{eq:SLS-2}.
The right-hand side is of length $\le 1$.
\end{enumerate}

\begin{proof}
Suppose $\mu (F) = 0$ for $F\in W^{\otimes k}$. 
Then $F$ may be presented as a linear combination of tensors 
of the form 
\[
w_1\otimes\dots \otimes w_{j-1}\otimes (ab \otimes cd - 
cb \otimes ad)\otimes w_{j+1}\otimes \dots \otimes w_k,
\]
where $w_i,a,b,c,d $ are monomials in $\Com\langle X^{(\omega )} \rangle$
and all tensor factors are of weight $-1$.
%

It remains to note that 
\begin{equation}\label{eq:ExchFactors}
(\tau^{-1}(ab), \tau^{-1}(cd), u) = (\tau^{-1}(cb), \tau^{-1}(ad), u) 
\end{equation}
holds in $\SLS\langle X\rangle $ for all appropriate 
$a,b,c,d\in \Com\langle X^{(\omega )}\rangle $ and 
for all $u\in \SLS\langle X\rangle $.
Since $\wt(ab)=\wt(cd)=-1$ and due to left-symmetry of an SLS-algebra, it is enough to consider the case when 
$\wt(a)=\wt(c)=-1$.
%

Let $x=\tau^{-1}(a)$. It follows from the definition of 
$\tau $ that for every $b$ such that $\wt(b)=0$ 
the element 
$\tau^{-1}(ab)$ considered in $\Nov \langle X \rangle $ 
is a linear combination of elements 
\[
R(x,u_1,\dots, u_m) = (\dots ((x\circ u_1)\circ u_2)\circ  \dots  )\circ u_m,\quad u_i\in N.
\]
It follows from \eqref{eq:SLS-1} that for every $u\in B$
we may write 
\[
\tau^{-1}(ab)\circ u = R(x,u_1,\dots, u_m)\circ u \in \SLS\langle X\rangle 
\]

In a similar way, one may present 
$\tau^{-1}(cd)$ in $\Nov \langle X\rangle $
as 
$R(y,v_1,\dots, v_l)$. Then 
\[
(\tau^{-1}(ab), \tau^{-1}(cd), u) 
=
(R(x,u_1,\dots, u_m), R(y,v_1,\dots, v_l), u), 
\]
\[
(\tau^{-1}(cb), \tau^{-1}(ad), u)
=
(R(y,u_1,\dots, u_m), R(x,v_1,\dots, v_l), u). 
\]
The right-hand sides of the last two relations are equal due to 
the identities of SLS-algebras. 

Indeed, let us prove the identity
\begin{equation}\label{eq:SLS-longId}
(R(x,u_1,\dots, u_m), R(y,v_1,\dots, v_l), u)
=
(R(y,u_1,\dots, u_m), R(x,v_1,\dots, v_l), u)    
\end{equation}
by induction on $m$ and $l$. 

If $m=l=0$ then \eqref{eq:SLS-longId} coincides with left symmetry. 
Assume $m=0$, $l>0$ and \eqref{eq:SLS-longId} is already proved 
for smaller $l$'s. Then \eqref{eq:SLS-2} together with left symmetry imply 
\begin{multline*}
(x,R(y,v_1,\dots, v_l), u)  
= (x,R(y,v_1,\dots, v_{l-1})\circ v_l, u)\\
= (R(y,v_1,\dots, v_{l-1}), x\circ v_l, u)
= (x\circ v_l, R(y,v_1,\dots, v_{l-1}), u)\\
= (y, R(x\circ v_l,v_1,\dots, v_{l-1}), u)
\end{multline*}
by induction. 
It remains to note that 
$R(x\circ v_l,v_1,\dots, v_{l-1})$
may be replaced here with 
$R(x,v_1,\dots, v_{l-1},v_l)$ by \eqref{eq:SLS-1}.

For $m>0$, proceed in a similar way:
\begin{multline*}
(R(x,u_1,\dots, u_m), R(y,v_1,\dots, v_l), u)
= (y, R(R(x,u_1,\dots, u_m),v_1,\dots, v_l), u)\\
 = (y, R(x,u_1,\dots, u_m,v_1,\dots, v_l), u)
 =  (y, R(x,v_1,\dots, v_l,u_1,\dots, u_m), u)\\
   = (y, R(R(x,v_1,\dots, v_l),u_1,\dots, u_m), u)
  = (R(x,v_1,\dots, v_l), R(y,u_1,\dots, u_m), u)\\
= (R(y,u_1,\dots, u_m),R(x,v_1,\dots, v_l), u).
\end{multline*}

As a result, 
\begin{multline}\label{eq:AssociatorId}
L(\tau^{-1}(w_1),\dots, \tau^{-1}(w_{j-1}), \tau^{-1}(ab),
\tau^{-1}  (cd) , \tau^{-1}( w_{j+1}),\dots \tau^{-1}( w_k),x )\\
-
L(\tau^{-1}(w_1),\dots, \tau^{-1}(w_{j-1}), \tau^{-1}(cb),
\tau^{-1}  (ad) , \tau^{-1}( w_{j+1}),\dots \tau^{-1}( w_k),x )\\
\equiv 
L(\tau^{-1}(w_1),\dots, \tau^{-1}(w_{j-1}), 
( \tau^{-1}(ab), \tau^{-1}  (cd) , u ) 
-  (\tau^{-1}(cb), \tau^{-1}  (ad), u ))
\end{multline}
modulo summands of smaller length, where 
$u=L(\tau^{-1}( w_{j+1}),\dots \tau^{-1}( w_k),x )$.
The right-hand side of 
\eqref{eq:AssociatorId} is zero by \eqref{eq:SLS-longId}.
\end{proof}

Summarizing the exposition of the section, we may state 

\begin{theorem}\label{thm:SLS-special}
For a nonempty set $X$, the subalgebra $F$ generated by $X$ in 
the free differential Perm-algebra
$\Perm\Der\langle X,d\rangle $ relative to the operation 
$f\circ g = fd(g)$, $f,g\in \Perm\Der\langle X,d\rangle $ 
is isomorphic to the free SLS-algebra generated by~$X$.
\end{theorem}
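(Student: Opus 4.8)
The plan is to assemble Theorem~\ref{thm:SLS-special} from the pieces already developed in Section~\ref{sec:BasisFree}. We have the homomorphism $\tau : \SLS\langle X\rangle \to \Perm\Der\langle X,d\rangle$ sending $x\mapsto x$ and $f\circ g\mapsto \tau(f)\tau(g)'$. Its image is, by definition, the subalgebra $F$ generated by $X$ under the operation $f\circ g = fd(g)$. So $\tau$ is surjective onto $F$, and it remains only to show that $\tau$ is injective; the theorem then follows because a surjective homomorphism with trivial kernel is an isomorphism.

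\smallskip

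The injectivity is exactly what the argument preceding Proposition~\ref{prop:Prop3} establishes, modulo Proposition~\ref{prop:Prop3} itself, which has already been proved. First I would recall the setup: suppose $\Ker\tau\neq 0$ and pick $f\in\Ker\tau$ of minimal length $k=\ell(f)$ with respect to the fixed basis $B$. Using Lemma~\ref{lem:Lemma2} we write $f$ as a linear combination of words $L(u_{1j},\dots,u_{kj},x)$ with $u_{ij}\in N$, and we may assume all top-degree summands end in the same letter $x\in X$, so $f = \sum_j \alpha_j L(u_{1j},\dots,u_{kj},x) + \tilde f$ with $\ell(\tilde f)<k$. Applying $\tau$ and reading off the coefficient of the monomials ending in $x^{(k)}$ in $\Perm\langle X^{(\omega)}\rangle$ — these are linearly independent from monomials ending in $x^{(j)}$, $j<k$, as noted using the Chapoton basis of the free Perm-algebra — we get $\sum_j \alpha_j \tau(u_{1j})\cdots\tau(u_{kj}) = 0$ in $\Com\langle X^{(\omega)}\rangle$. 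Setting $w_{ij} = \tau(u_{ij})\in W$ and $F = \sum_j \alpha_j w_{1j}\otimes\cdots\otimes w_{kj}\in W^{\otimes k}$, this says $\mu(F)=0$, so Proposition~\ref{prop:Prop3} applies and gives $\ell(g)<k$ where $g = \sum_j \alpha_j L(u_{1j},\dots,u_{kj},x)$. But $u_{ij}=\tau^{-1}(w_{ij})$ by the way $\tau^{-1}:W\to\Bbbk N$ was defined (the $u_{ij}$ lie in $N$, and their $\tau$-images are the $w_{ij}$), so $g$ is precisely the top-length part of $f$, whence $f = g + \tilde f$ has length $<k$, contradicting minimality. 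Therefore $\Ker\tau = 0$.

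\smallskip

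Having the kernel trivial, I would conclude: $\tau$ restricts to an isomorphism of left-symmetric algebras $\SLS\langle X\rangle \xrightarrow{\sim} F$, and since the $\SLS$ identities \eqref{eq:SLS-1}, \eqref{eq:SLS-2} hold on any differential Perm-algebra (as verified at the start of the section), $F$ is indeed an SLS-algebra and the universal property of $\SLS\langle X\rangle$ identifies it as the free one on $X$. I do not expect any genuine obstacle here — the theorem is a summary of the section, and the only non-formal input, Proposition~\ref{prop:Prop3}, is already available. The one point that needs care is making sure the "coefficient of $x^{(k)}$" extraction is legitimate, i.e. that in the expansion $\tau(u) = \tau(u_1)\cdots\tau(u_k)x^{(k)} + g_u$ the correction term $g_u$ really only involves last letters $x^{(j)}$ with $j<k$ and so cannot cancel against the leading term across different summands; this is handled by the displayed expansion of $\tau$ on words of the form \eqref{eq:RNform} together with the linear independence of Perm-monomials with distinct last letters, both recalled just above Proposition~\ref{prop:Prop3}.
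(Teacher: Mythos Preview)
Your proposal is correct and matches the paper's approach exactly: the paper itself presents Theorem~\ref{thm:SLS-special} as a summary of Section~\ref{sec:BasisFree} (``Summarizing the exposition of the section, we may state\ldots''), and you have accurately reconstructed that exposition, including the minimal-length contradiction argument reduced to Proposition~\ref{prop:Prop3}. The only cosmetic difference is that the paper assumes \emph{all} monomials of $f$ end in the same letter $x$ (not just the top-length ones), but your weaker assumption suffices for extracting the $x^{(k)}$-coefficient, so this is immaterial.
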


\begin{corollary}
A linear basis of the subalgebra $F$ mentioned 
in Theorem \ref{thm:SLS-special} consists 
of all monomials $u\in \Perm\<X^{(\omega )}\>$
of weight $-1$ such that $u=vx^{(n)}$, $x\in X$, $n>0$.
\end{corollary}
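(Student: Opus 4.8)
The plan is to show $F=\Bbbk X\oplus V$, where $V\subseteq\Perm\<X^{(\omega)}\>$ denotes the span of all monomials $u$ with $\wt(u)=-1$ of the form $u=vx^{(n)}$, $x\in X$, $n>0$. Since the monomials in $X$ and those spanning $V$ together form part of the standard monomial basis of $\Perm\<X^{(\omega)}\>$ (and hence a linearly independent set), this identification proves the corollary. Recall that $F=\tau(\SLS\<X\>)$ with $\tau$ injective by Theorem~\ref{thm:SLS-special}, so $X\subseteq F$ automatically.

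First I would establish $F\subseteq\Bbbk X\oplus V$ by checking that $\Bbbk X\oplus V$ is a subalgebra of $\Perm\Der\<X,d\>$ under $f\circ g=fd(g)$. Each generator $x=x^{(0)}$ has weight $-1$. For weight-$(-1)$ elements $p,q\in\Bbbk X\oplus V$ the product $p\,d(q)$ again has weight $-1$, since $d$ raises weight by $1$; and every monomial occurring in $d(q)$ ends in a letter of positive order, because differentiating the last letter of a monomial raises its order, while differentiating an earlier letter leaves the last letter — which has positive order when $q\in V$, or equals $x^{(1)}$ when $q\in\Bbbk X$ — in place. Hence $p\,d(q)\in V$, so $\Bbbk X\oplus V$ is a subalgebra containing $X$, and therefore $F\subseteq\Bbbk X\oplus V$.

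The substantial direction is $V\subseteq F$, which I would prove by a leading-term argument. Let $\pi:\Perm\<X^{(\omega)}\>\to\Com\<X^{(\omega)}\>$ be the canonical projection, i.e.\ the right vertical arrow of the diagram of this section; it is a homomorphism of differential algebras with $\pi\tau=\tau\phi$, and for any element $P$ the product $P\cdot x^{(n)}$ is obtained from the commutative polynomial $\pi(P)$ by appending $x^{(n)}$ as last letter, since right multiplication by a letter absorbs all earlier letters into the commutative part. Recall from this section that $\tau(L(u_1,\dots,u_k,x))=\tau(u_1)\cdots\tau(u_k)\,x^{(k)}+g$ with every monomial of $g$ ending in $x^{(j)}$, $j<k$; this remains true, with the same leading term, for arbitrary elements $u_i$ by multilinearity of $\tau$ and of $L$. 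Next I would prove a combinatorial lemma: any monomial $v\in\Com\<X^{(\omega)}\>$ with $\wt(v)=-n$, $n\ge1$, factors as $v=m_1\cdots m_n$ with each $m_i$ of weight $-1$ — indeed, for $n\ge2$ the inequality $\wt(v)=-n<0$ forces a factor $z^{(0)}$ of order $0$, which one splits off as a single weight-$(-1)$ block, inducting on $n$. Now, given a standard basis monomial $u=vx^{(n)}$ of $V$ (so $\wt(v)=-n$, $n\ge1$), factor $v=m_1\cdots m_n$ as above; using the isomorphism $\tau:\Nov\<X\>\to W$ onto the weight-$(-1)$ subspace $W$ of $\Com\<X^{(\omega)}\>$ from \cite{DzhLofwall2002}, choose $f_i\in\Nov\<X\>$ with $\tau(f_i)=m_i$ and then $u_i\in\SLS\<X\>$ with $\phi(u_i)=f_i$. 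Applying $\pi$ to the leading term, $\tau(u_1)\cdots\tau(u_n)\,x^{(n)}$ becomes $\bigl(\tau\phi(u_1)\cdots\tau\phi(u_n)\bigr)x^{(n)}=(m_1\cdots m_n)\,x^{(n)}=vx^{(n)}=u$, the product $m_1\cdots m_n$ now being taken in the polynomial algebra $\Com\<X^{(\omega)}\>$ and $x^{(n)}$ appended afterwards; hence $\tau(L(u_1,\dots,u_n,x))=u+(\text{monomials ending in }x^{(j)},\ j<n)\in F$, with all the error terms again lying in $V$ by the previous paragraph. Ordering the monomial basis of $V$ by the order of the last letter (refined arbitrarily), these elements of $F$ — one for each basis monomial $u$ — form a unitriangular family with respect to the basis of $V$, so, working in each $X$-multidegree separately where $V$ is finite-dimensional, they span $V$. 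Thus $V\subseteq F$, and combined with the first part $F=\Bbbk X\oplus V$.

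I expect the only genuine difficulty to be the step $V\subseteq F$: one must realize each prescribed weight-$(-1)$ monomial as the leading term of $\tau$ applied to an explicit element of $\SLS\<X\>$, which relies on the combinatorial splitting of a commutative monomial into weight-$(-1)$ factors and on transporting the known identification $W\cong\Nov\<X\>$ back along $\phi$ and $\tau$. The inclusion $F\subseteq\Bbbk X\oplus V$, the splitting lemma, and the final unitriangularity argument are routine.
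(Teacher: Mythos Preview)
Your argument is correct and is precisely the Dzhumadil'daev--L\"ofwall style argument the paper alludes to: establish the easy inclusion by showing the weight-$(-1)$ span with positive last-letter order is closed under $\circ$, then realize each target monomial as the leading term of some $\tau(L(u_1,\dots,u_n,x))$ by factoring its commutative prefix into weight-$(-1)$ blocks and lifting through $W\cong\Nov\<X\>$ and $\phi$, concluding by unitriangularity in each multidegree. Your observation that the literal statement omits the generators $X$ (which have last-letter order $0$) and that one should read the basis as $X\cup\{\text{described monomials}\}$, equivalently $F=\Bbbk X\oplus V$, is also well taken.
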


The proof is completely analogous to that of \cite{DzhLofwall2002} for $\Com\Der\<X,d\>$.

\section{A criterion of speciality}\label{sec:Crit}

Let us say that an SLS-algebra $(A,\circ )$ is {\em nice} 
if for every $x\in A$ there exists a linear map $\mu_x : A\circ A \to A$
such that $\mu_x (a\circ b) = (a\circ x)\circ b$ for all $a,b\in A$.

It is easy to see that $(A,\circ )$ is nice
if and only if
for every $a_i,b_i\in A$ ($i=1,\dots,n$) the equality 
$$
\sum\limits_i a_i\circ b_i=0
$$
implies 
$$
\sum\limits_i (a_i\circ x)\circ b_i = 0
$$
for all $x\in A$.

\begin{example}
A Novikov algebra is a nice SLS-algebra, 
the operator $\mu_x$ coincides with the right multiplication by~$x$.
\end{example}

\begin{example}\label{exmp:SLS-1}
Let $V$ be a linear space and let $A = T(V)/I$, 
where $I$ is the ideal spanned by 
\[
a_0a_1\dots a_n a_{n+1} - a_0a_{\sigma(1)}\dots a_{\sigma(n)} a_{n+1} ,
\quad a_i\in V, \ \sigma \in S_n.
\]
Obviously, $A$ is an associative (thus, left-symmetric) algebra 
satisfying the identities of SLS-algebras. This is not a Novikov 
algebra, but it is easy to see 
that if
$\sum\limits_i u_iv_i = 0$ in $A$ then
$\sum\limits_i u_ixv_i=0$.
So, $A$ is a nice SLS-algebra which is not Novikov.
\end{example}

\begin{example}\label{exmp:SLS-2Q}
Suppose $V$ is a 3-dimensional space with a basis $x,y,z$. 
Let $A$ be the nice SLS-algebra from Example~\ref{exmp:SLS-1}, 
and let $A' = A/(xz)$. Then $xyz\ne 0$ in $A'$, so $A'$ 
is an SLS-algebra which is not nice.
\end{example}

Example~\ref{exmp:SLS-2Q} shows us that the class of nice SLS-algebra 
is not a variety: it is not closed relative to homomorphic images.

The main purpose of this section is to prove the following criterion. 

\begin{theorem}\label{thm:NiceSpec}
A left-symmetric algebra $(A,\circ )$ 
embeds into a differential Perm-algebra in such a way that 
$a\circ b = a d(b)$ for $a,b\in A$
if and only if $A$ is a nice SLS-algebra.
\end{theorem}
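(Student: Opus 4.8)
The plan is to prove the two implications separately, with the necessity direction being essentially a verification and the sufficiency direction carrying the real content. For necessity, suppose $(A,\circ)$ embeds into a differential Perm-algebra $(P,d)$ via $a\circ b = ad(b)$. The SLS-identities \eqref{eq:SLS-1}, \eqref{eq:SLS-2} were already checked to hold on any such $P$ at the start of Section~\ref{sec:BasisFree}, so it remains to verify niceness. Given $\sum_i a_i\circ b_i = 0$ in $A\subseteq P$, i.e.\ $\sum_i a_i d(b_i) = 0$, I would set $\mu_x(\sum_i a_i d(b_i)) := \sum_i (a_i x) d(b_i)$ and check this is well defined: if $\sum_i a_i d(b_i)=0$ then $\sum_i (a_i x)d(b_i) = \sum_i x(a_i d(b_i)) = x\cdot 0 = 0$, using left commutativity $a_i x \cdot d(b_i) = x a_i \cdot d(b_i) = x(a_i d(b_i))$. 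Then $\mu_x(a\circ b) = (ax)d(b) = (a d(x))$\dots wait --- one must be careful that $x\in A$ means $x$ acts as an element, and $(a\circ x)\circ b = (ad(x))d(b)$, so in fact one needs $\mu_x(a\circ b) = (a d(x))\circ b$; the correct definition uses the element $d(x)\in P$ in place of $x$, and well-definedness follows from left commutativity exactly as above. This direction is routine once the bookkeeping with $d(x)$ is set straight.

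For sufficiency, suppose $A$ is a nice SLS-algebra. By Corollary~\ref{cor:Dialg-embedding} it suffices to embed $(A,\circ)$ into a Novikov dialgebra with $a\vdash b = a\circ b$. The strategy is to start from the universal enveloping left-symmetric dialgebra $\mathcal D(A) = (T(A),\vdash,\dashv)$ of Section~3, which already contains $A$ as a left-symmetric algebra via $\iota$, and then pass to a suitable quotient $\mathcal D(A)/J$ that is a Novikov dialgebra while still containing $A$. Concretely, $\mathcal D(A)$ satisfies \eqref{eq:zero-id} and \eqref{eq:diLSym} but not the ``right commutativity'' relations \eqref{eq:diRCom} needed for $\di\Nov$; so I would let $J$ be the dialgebra ideal of $\mathcal D(A)$ generated by all elements
\[
(u\dashv v)\dashv w - (u\dashv w)\dashv v, \qquad (u\vdash v)\dashv w - (u\vdash w)\vdash v,
\]
for $u,v,w\in T(A)$, and set $N = \mathcal D(A)/J$. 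Then $N$ is by construction a Novikov dialgebra with $a\vdash b = a\circ b$ for $a,b\in A$, and the entire problem reduces to the single claim: \emph{the composite $A \xrightarrow{\iota} \mathcal D(A) \to N$ is injective}, i.e.\ $J\cap A = 0$ inside $T(A)$.

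The heart of the argument is proving $J\cap A = 0$, and this is where niceness enters. I expect to analyze the ideal $J$ explicitly by understanding how products of generators of $J$ land in the degree-one component $A\subseteq T(A)$: applying $M$ and the dialgebra operations to the generating relations and iterating \eqref{eq:tensor-vdash}, \eqref{eq:tensor-dashv}, an element of $J$ lands in $A$ only through expressions of the shape $M$ applied to words where the Novikov defect has been ``absorbed,'' and the resulting element of $A$ is controlled by the operators $R_x\colon a\circ b\mapsto (a\circ x)\circ b$ --- precisely the maps $\mu_x$ whose well-definedness is niceness. More concretely: the relation $(u\vdash v)\dashv w - (u\vdash w)\vdash v$, after applying $M$ and using Lemma~\ref{lem:Mprod}, reflects the difference between $R_{M(w)}$ applied along two routes, and niceness guarantees this difference vanishes on all of $A\circ A$, hence on anything that lands in $A$. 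I would formalize this by building a left-symmetric-dialgebra structure on $T(A)$ with \emph{modified} operations that already satisfy \eqref{eq:diRCom} --- using the operators $\mu_x$ in place of right multiplications wherever \eqref{eq:tensor-vdash} produced a term $M(u)\circ a$ that needs to be reassociated --- and then exhibit a dialgebra homomorphism $N \to (T(A),\text{new ops})$ splitting the quotient map on $A$. The main obstacle, and the step requiring genuine care, is checking that these $\mu_x$-modified operations on $T(A)$ again satisfy all the $\di\Nov$ relations \eqref{eq:zero-id}--\eqref{eq:diRCom}: the zero-identities and \eqref{eq:diLSym} should go through as in Proposition~\ref{prop:tensor-diLS} with $\mu_x$ substituted, but verifying \eqref{eq:diRCom} is exactly the point at which the SLS-identities \eqref{eq:SLS-1}, \eqref{eq:SLS-2} together with the consistency of the family $\{\mu_x\}$ must be combined, and getting the induction on word length to close there is the delicate computation. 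Once that envelope is constructed, $A\hookrightarrow N$ follows, and Theorem~\ref{thm:diNov-Perm} via Corollary~\ref{cor:Dialg-embedding} completes the embedding into a differential Perm-algebra.
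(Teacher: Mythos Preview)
Your overall architecture matches the paper's: reduce via Corollary~\ref{cor:Dialg-embedding} to embedding $A$ into a Novikov dialgebra, pass through the universal $\di\LSym$-envelope $\mathcal D(A)$ of Section~3, and quotient by an ideal $J$ to force the $\di\Nov$ identities \eqref{eq:diRCom}. The necessity direction is handled the same way (the paper writes $\mu_x(a\circ b)=d(x)\,a\,d(b)=a\,d(x)\,d(b)=(a\circ x)\circ b$, which is exactly your corrected formulation with $d(x)$).

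Where you diverge is at the crucial step $J\cap A=0$. You propose to build a \emph{splitting}: equip $T(A)$ with a second, $\mu_x$-modified Novikov dialgebra structure and produce a dialgebra map $\mathcal D(A)/J\to (T(A),\text{new})$ restricting to the identity on~$A$. The paper does not attempt this. Instead it defines $J$ \emph{explicitly}, not as the ideal generated by the Novikov defects, but as a sum $J=I+V$: here $V$ is the right $T(A)$-ideal generated by $ax-\mu_x(a)$ for $a\in A\circ A$, $x\in A$, and $I$ is the span of all $ufv$ with $f$ in the ``annihilator'' set $K=\{f\in T(A):M(afb)=0\ \forall a,b\in A\}$. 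The paper then proves in sequence (Lemmas~\ref{lem:ideal1}--\ref{lem:Intrsect-zero}) that $I$ is a dialgebra ideal, that $I+V$ is a dialgebra ideal, that $\mathcal D(A)/(I+V)$ is a Novikov dialgebra, and that $(I+V)\cap A=0$. The last step is a direct monomial argument: elements of $V$ have principal terms starting with letters from a basis of $A\circ A$, and any piece of $I$ whose monomials also start in $A\circ A$ can be absorbed into $V$ via the relation $(a\circ b)w\equiv M(awb)\pmod V$, leaving nothing of length~$1$.

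Your splitting strategy is reasonable in spirit, but the step you yourself flag as delicate---specifying the $\mu_x$-modified operations and verifying all of \eqref{eq:zero-id}--\eqref{eq:diRCom}---is essentially the whole proof, and you have not said what those operations are. There is a real obstacle: the maps $\mu_x$ are defined only on $A\circ A$, not on all of~$A$, so a recursion literally parallel to \eqref{eq:tensor-vdash} with $\mu_x$ inserted does not make sense as stated; one would first need to extend the $\mu_x$ or restrict the carrier space. The paper's route avoids ever needing a global Novikov structure on $T(A)$: niceness enters only through the linear subspace $V$, and the intersection $J\cap A=0$ is established by bookkeeping on first letters rather than by constructing a retraction.
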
 

The ``only if'' part is easy: the defining identities of an SLS-algebra 
holds on a differential Perm-algebra, and the map $\mu_x$ acts as a (left) multiplication by $d(x)$: 
$$
\mu_x(a\circ b) = d(x)ad(b) = a d(x)d(b) = (a\circ x)\circ b
$$
for $a,b,x\in A$.

To prove the ``if'' part, we first construct the universal enveloping
Novikov dialgebra for a nice SLS-algebra and then apply 
Corollary~\ref{cor:Dialg-embedding}.

Suppose $(A,\circ )$ is a nice SLS-algebra.
As above, let $T(A)$ stand for the (associative) tensor algebra of 
the space $A$ (without an identity), and let $\mathcal D(A)$ 
be the left-symmetric dialgebra constructed on $T(A)$
relative to the operations defined by \eqref{eq:tensor-vdash}, \eqref{eq:tensor-dashv}.

Let $V$ stand for the right ideal of the algebra $T(A)$ generated by
\[
g(a,x) = ax - \mu_x(a), \quad a\in A\circ A\subseteq A,\ x\in A.
\]
Choose a linear basis $X$ of $A$ in the form $X = X_0\cup X_1$
where $X_0$ is a linear basis of $A\circ A$ and $X_1$ is a complement of 
$X_0$.
It is easy to see that 
\[
 V = \mathrm{Span}\, \{ax_1\dots x_n - \mu_{x_n}\dots \mu_{x_1}(a) \mid  a\in X_0, x_i\in X, i\ge 1\}.
\]
In particular, $V\cap A = 0$ since every generator of $V$ as of a linear space contains a unique principal term.

Note that 
\begin{equation}\label{eq:Mu-via-M}
 (a\circ b) f - M(afb) \in V
\end{equation}
for all $a,b\in A$, $f\in T(A)$, by the definition of~$M$.

Denote 
\[
 K = \{f\in T(A) \mid M(afb)=0\ \text{for all}\ a,b\in A \}
\]
For example, $xy-yx\in K$. 
Moreover, if $f\in K$ then for all $u,v\in T(A)\cup \{1\}$ we have 
$ufv \in K$. Indeed, 
\[
 M(aufvb) = M(M(au)fx_1\dots x_nb) = M(M(M(au)fx_1)x_2\dots x_nb) = 0 
\]
for $v=x_1\dots x_n$, $n\ge 0$. Here we used the obvious corollary of the definition of $M$:
\[
 M(uv) = M(M(u)v),\quad u,v\in T(A).
\]
In particular, if $f\in K$ and $u,v\in T(A)$ then 
$M(ufv)=0$.

Note that 
\[
h(x,y,z) = xyz + (x\circ z)\circ y -(x\circ y)z - (x\circ z)y \in K 
\]
for  all $x,y,z\in A$. Indeed, compute 
\begin{multline*}
 M(ah(x,y,z)b) 
 = M(axyz + a(x\circ z)\circ y -a(x\circ y)z - a(x\circ z)y) \circ b \\
 =(
 ((a\circ x)\circ y)\circ z + a\circ ((x\circ z)\circ y) -(a\circ (x\circ y))\circ z - (a\circ (x\circ z))\circ y
 )\circ b  \\
 =
 (
 ((a\circ x)\circ z)\circ y + a\circ ((x\circ z)\circ y) -(a\circ z)\circ (x\circ y) - (a\circ (x\circ z))\circ y
 )\circ b
 =0\circ b=0.
\end{multline*}

Let $I\subset T(A)$ stand for the linear span of all $ufv$, $u\in T(A)$, $v\in T(A)\cup\{1\}$; this is a proper 
subset of $K$. In particular, $uxy-uyx \in I$ for all $u\in T(A)$, $x,y\in A$. Hence, we may 
permute variables in $T(A)$ modulo $I$ leaving the first letter unchanged.

\begin{lemma}\label{lem:ideal1}
 The set $I$ is a two-sided ideal of $\mathcal D(A)$.
\end{lemma}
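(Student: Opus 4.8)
The statement asserts that $I$ (the linear span of all products $ufv$ with $u\in T(A)$, $v\in T(A)\cup\{1\}$, where the middle factors are the relevant relators — implicitly, $V$ together with the subspace generated by $h(x,y,z)$ and commutators, all of which have already been shown to sit in $K$) is a two-sided ideal of the left-symmetric dialgebra $\mathcal D(A)=(T(A),\vdash,\dashv)$. The plan is to check the four absorption conditions $\mathcal D(A)\vdash I\subseteq I$, $I\vdash\mathcal D(A)\subseteq I$, $\mathcal D(A)\dashv I\subseteq I$, $I\dashv \mathcal D(A)\subseteq I$ directly from the inductive definitions \eqref{eq:tensor-vdash} and \eqref{eq:tensor-dashv}, exploiting Lemma~\ref{lem:Mprod} to collapse left factors via $M$, and exploiting that $I\subseteq K$ together with the closure property of $K$ under $u(\cdot)v$ established just above the lemma.

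First I would dispose of the two $\dashv$-cases, which are the easy ones. For $w\dashv w' = wM(w')$: if $w\in I$, say $w=ufv$, then $w\dashv w' = ufvM(w')\in I$ since $vM(w')\in T(A)\cup\{1\}$; if instead $w'\in I$ then $M(w')=0$ (because $I\subseteq K$ and $M$ annihilates $K$ even without the outer $a,b$ — more precisely $M(w')=M(1\cdot w'\cdot 1)=0$ once we observe the definition of $K$ forces this), so $w\dashv w'=0\in I$. For the $\vdash$-cases I would induct on the length of the second factor, exactly as in the proofs of Proposition~\ref{prop:tensor-diLS} and Proposition~\ref{prop:NormalU-LS}. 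The base case $w\vdash a = M(w)\circ a$ for $a\in A$: if $w\in I$ then $M(w)=0$ so this vanishes; if $a$ is replaced by an element of $I\cap A$ — but $I\cap A=0$ under our setup (every generator of $I$ has length $\ge 2$, or one invokes $V\cap A=0$ and the analogous fact for the $K$-generators), so there is nothing to check in the base case when the \emph{second} argument is in $I$. For the inductive step one expands $w\vdash(w'a)=(w\vdash w')a + w'(M(w)\circ a) - w'M(w)a$ and checks term by term: if $w\in I$ the first term lies in $I$ by induction, and $M(w)=0$ kills the remaining two; if $w'a\in I$, one writes $w'a$ as a sum of $ufv$'s and argues by the position of the relator $f$ relative to the trailing letter $a$, again using that the middle-factor closure of $I$ is respected and that $M$ kills everything in $I$.

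The main obstacle — and the place where the argument is genuinely informative rather than bookkeeping — is verifying $I\vdash \mathcal D(A)\subseteq I$ in the inductive step when the relator is one of the length-three generators $h(x,y,z)$ or $g(a,x)=ax-\mu_x(a)$ sitting at the tail end of the second factor, because then the rewriting $w'M(w)a$ versus $w'(M(w)\circ a)$ interacts with the relator in a way that is not literally ``a middle factor times stuff.'' The clean way to handle this is to note that all generators of $I$ lie in $K$, and the three terms on the right of \eqref{eq:tensor-vdash} each, when the second factor is in $I$, produce something of the form (element of $T(A)$) times (element of $K$) times (element of $T(A)\cup\{1\}$); invoking the already-proved fact that $u(\cdot)v$ preserves $K$, and then refining to show it actually lands in the smaller set $I$, closes the case. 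I would organize this as a short sub-lemma: \emph{if $f\in I$ and $u\in T(A)$, $v\in T(A)\cup\{1\}$, then $ufv\in I$}, which is immediate from the definition of $I$, and then observe that \eqref{eq:tensor-vdash} rewrites $u\vdash w$ as a $\Bbbk$-combination of such $ufv$'s whenever $w\in I$. With that, all four absorption conditions follow, completing the proof.
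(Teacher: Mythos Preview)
Your argument has a genuine gap arising from two related errors. First, you have misidentified $I$: in the paper, $I$ is the span of $ufv$ with $f\in K$, $u\in T(A)$ (necessarily nonempty), and $v\in T(A)\cup\{1\}$; the elements $g(a,x)=ax-\mu_x(a)$ generate the \emph{separate} right ideal $V$, which is not part of $I$ at all (rather $J=I+V$ is handled in the next lemma). Second, and more seriously, your repeated assertion that $M(w)=0$ for every $w\in I$ is false. The definition of $K$ says $M(afb)=0$ for $a,b\in A$, not $M(f)=0$; concretely, for $w=a(xy-yx)\in I$ one has $M(w)=(a\circ x)\circ y-(a\circ y)\circ x$, which need not vanish in an SLS-algebra. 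Consequently your claims ``$w'\in I\Rightarrow w\dashv w'=wM(w')=0$'' and ``$w\in I\Rightarrow w\vdash a=M(w)\circ a=0$'' both collapse precisely in the case $v=1$, and your inductive step for $\mathcal D(A)\vdash I$ never gets off the ground.

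The paper's proof faces this squarely: it first shows that although $M(uf)$ need not vanish, it always lies in $K$ (via left symmetry of $A$), so that $wM(uf)\in I$ handles the $\dashv$-direction. For the $\vdash$-direction the key device, which your plan omits, is a Poisson-type identity
\[
a\vdash(uv)\equiv (a\vdash u)v + u(a\vdash v) - uav \pmod I,\quad a\in A,\ u,v\in T(A),
\]
proved by induction on the length of $v$. This lets one compute $M(uf)\vdash w\equiv -(n-1)\,wM(uf)\in I$ for $w$ of length $n$, and likewise distribute $a\vdash(\cdot)$ across the factors of $ufv$ so as to reduce $T(A)\vdash I\subseteq I$ to the verification that $a\vdash f\in K$ whenever $f\in K$. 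Your closing ``sub-lemma'' that $I$ is closed under $u(\cdot)v$ is correct but insufficient: the inductive formula \eqref{eq:tensor-vdash} does \emph{not} by itself express $u\vdash w$ as a combination of such sandwiches when $w\in I$, because of the term $w'(M(u)\circ a)$; the Poisson identity is exactly what bridges that gap.
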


\begin{proof}
For all $a\in A$, $u,v\in T(A)$ we have 
\begin{equation}\label{eq:A-la-Poisson}
a\vdash (uv) \equiv (a\vdash u)v + u(a\vdash v) - uav\pmod I.
\end{equation}
It is easy to prove by induction on the length of $v$.

Let $f\in K$, $u\in T(A)$, $v\in T(A)\cup \{1\}$.
If $v\ne 1$ then $M(ufv)=0$ and thus 
\[
 ufv \vdash T(A) + T(A)\dashv ufv =0.
\]
Consider the case $v=1$. 
For every $a,b\in A$, left symmetry implies
\begin{multline*}
 M(aM(uf)b) =  (a\circ M(uf))\circ b \\
 =
 (a\circ (M(uf)\circ b)  + (M(uf)\circ a)\circ b - M(uf)\circ (a\circ b ) = 0
\end{multline*}
since $M(uf)\circ x = M(ufx)=0$ for all $x\in A$.
Hence, $M(uf)\in K$.

Therefore, \eqref{eq:A-la-Poisson} leads to 
\[
 uf\vdash w = M(uf)\vdash w \equiv  -(n-1)wM(uf) \equiv 0 \pmod I
\]
for every $w\in T(A)$ of length $n$. 

Similarly, 
\[
 w\dashv uf = w M(uf) \in I
\]
for every $w\in T(A)$.

This is a straightforward corollary of the definition that 
$I\vdash T(A) \subseteq I$. It remains to show $T(A)\dashv I\subseteq I$.
Choose $ufv \in I$ for $f\in K$, $w\in T(A)$, and apply \eqref{eq:A-la-Poisson} for $x=M(w)$:
\[
 w\vdash ufv \equiv (x\vdash u)fv + u(x\vdash f)v +uf(x\vdash v) -2uxfv \pmod I
\]
(for $v\ne 1$; if $v=1$ then the equation is analogous but simpler).
Obviously, we need to prove $x\vdash f \in K$ since all other summands belong to~$I$.
Indeed, consider 
\[
x\vdash af \equiv (x\circ a)f+a(x\vdash f)-axf \pmod I .
\]
Since $M(I)\circ b = 0$, we obtain
\begin{multline*}
M(a(x\vdash f)b) 
= M((x\vdash af) b) - M((x\circ a)fb) + M(axfb) \\
= M(x\vdash af) \circ b 
= (x\circ M(af))\circ b = 0
\end{multline*}
due to left symmetry. Hence, $T(A)\vdash I \subseteq I$.
\end{proof}


\begin{lemma}\label{lem:di-ideal-I}
The 
set $J=I+V$ is a two-sided ideal of the dialgebra $\mathcal D(A)$. 
\end{lemma}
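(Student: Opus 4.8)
The plan is to show that $J = I + V$ is a two-sided ideal of $\mathcal D(A)$ by verifying each of the four inclusions $J\vdash T(A)\subseteq J$, $T(A)\vdash J\subseteq J$, $J\dashv T(A)\subseteq J$, $T(A)\dashv J\subseteq J$. By Lemma~\ref{lem:ideal1} the corresponding inclusions already hold with $I$ in place of $J$, so it suffices to handle the $V$-part, i.e.\ to prove $V\vdash T(A)\subseteq J$, $T(A)\vdash V\subseteq J$, $V\dashv T(A)\subseteq J$, and $T(A)\dashv V\subseteq J$. Since $V$ is (by definition) the right ideal of the associative algebra $T(A)$ generated by the elements $g(a,x)=ax-\mu_x(a)$ with $a\in A\circ A$, $x\in A$, a typical element of $V$ has the form $g(a,x)w$ for $w\in T(A)\cup\{1\}$; but $g(a,x)w = axw - \mu_x(a)w$, and $axw$ is of the form (letter)(letter)(tail), so by \eqref{eq:Mu-via-M} we have $(a'\circ x)w - M(a'xw)\in V$ whenever $a' = M(\text{something})$; more directly, the generators themselves can be written, using $a\in A\circ A$ so $a = M(a_1a_2)$ for suitable $a_i$, so that $g(a,x)$ lies in the span of elements $(a_1\circ a_2)x - M(a_1a_2x)$, which is exactly \eqref{eq:Mu-via-M}. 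So the right ideal $V$ is spanned by elements of the form $\big((p\circ q)x - M(pqx)\big)w$ with $p,q,x\in A$, $w\in T(A)\cup\{1\}$.

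For the $\dashv$ side both inclusions are quick. For $v\dashv V$: by \eqref{eq:tensor-dashv} we have $v\dashv f = vM(f)$, and for a generator $f = g(a,x)w$ of $V$ one computes $M(g(a,x)w) = M\big((a\circ x)w\big) - \mu_x(a)\circ M(w)\cdots$; more cleanly, applying $M$ to \eqref{eq:Mu-via-M} and using $M(uv)=M(M(u)v)$ shows $M(V)\subseteq$ (span of $(a\circ x)\circ M(w) - $ the corresponding iterated product), and since in $A$ we have $\mu_x(a\circ b) = (a\circ x)\circ b$, the element $M(f)$ actually lies in $A$ and equals $0$ for the defining generators, hence $M(V)=0$; therefore $v\dashv V = vM(V) = 0 \subseteq J$. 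For $V\dashv v = f\vdash M(v)$ (by Lemma~\ref{lem:Mprod}), we reduce to the $\vdash$ case below. So the heart of the matter is the two $\vdash$-inclusions.

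For $V\vdash T(A)\subseteq J$: by Lemma~\ref{lem:Mprod}, $f\vdash w = M(f)\vdash w$, and since $M(V)=0$ as just noted, this gives $V\vdash T(A) = M(V)\vdash T(A) = 0\subseteq J$. The genuinely non-trivial inclusion is $T(A)\vdash V\subseteq J$, and this is where I expect the main obstacle. By Lemma~\ref{lem:Mprod} it is enough to treat $a\vdash f$ for $a\in A$ and $f$ a generator of $V$, and then iterate. Using the Poisson-type congruence \eqref{eq:A-la-Poisson}, $a\vdash (g(a_0,x)\, w) \equiv (a\vdash g(a_0,x))\,w + g(a_0,x)\,(a\vdash w) - g(a_0,x)\,a\,w \pmod I$; the last two terms lie in $V\cdot T(A) = V \subseteq J$ since $V$ is a right ideal of $T(A)$ and $I\subseteq J$, so it remains to show $a\vdash g(a_0,x)\in J$ for a bare generator. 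Writing $a_0 = b\circ c$ and using \eqref{eq:tensor-vdash} to expand $a\vdash\big((b\circ c)x\big)$ against $a\vdash \mu_x(b\circ c) = a\circ\big((b\circ c)\circ x\big)$, the difference should reduce — modulo $I$, using left symmetry, the identities \eqref{eq:SLS-1}, \eqref{eq:SLS-2}, the element $h(x,y,z)\in K\subseteq J$, and the niceness hypothesis $\mu_x\mu_y = $ (the operator representing $(\,\cdot\,\circ x)\circ y$, symmetric in appropriate places by \eqref{eq:SLS-1}) — to a combination of generators of $V$ plus an element of $I$. The key computational input is that $M$ applied to the relevant expressions produces exactly $(a\circ(b\circ c))\circ x - (a\circ((b\circ c)\circ x))$-type associator combinations, which vanish by the SLS-identities, so that the left-symmetric-dialgebra action of $a$ on $g(a_0,x)$ stays inside $I + V$; this is precisely the point where the definitions of $K$, $I$, $V$ and the niceness of $A$ must all be used together, and organizing this bookkeeping cleanly is the crux of the proof.
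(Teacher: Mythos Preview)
Your overall structure is right: reduce to the $V$-part by Lemma~\ref{lem:ideal1}, then check the four inclusions. But the key computational claim you rely on is false, and this breaks two of your four cases.

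You assert that $M(V)=0$, arguing that $M$ applied to the defining generators vanishes. It does not. For a bare generator $g(b\circ c,x)=(b\circ c)x-\mu_x(b\circ c)$ one has
\[
M\big(g(b\circ c,x)\big)=(b\circ c)\circ x-(b\circ x)\circ c,
\]
which is generically nonzero in $A$ (SLS-1 only gives $((b\circ c)\circ x)\circ u=((b\circ x)\circ c)\circ u$ after a further right multiplication). What \emph{is} true is $M\big(g(b\circ c,x)w\big)=0$ for every nontrivial $w\in T(A)$, by~\eqref{eq:SLS-1}; this yields $VA\vdash T(A)=0$, but not $V\vdash T(A)=0$. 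Consequently your arguments for $V\vdash T(A)\subseteq J$ and $T(A)\dashv V\subseteq J$ both collapse at the bare-generator case.

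The paper repairs exactly this gap. For $V\vdash T(A)$: using~\eqref{eq:A-la-Poisson} one reduces to showing $wM\big(g(b\circ c,x)\big)\in I$, and this follows from
\[
u\big((b\circ c)\circ x-(b\circ x)\circ c\big)\equiv u\,h(b,x,c)-u\,h(b,c,x)\equiv 0\pmod I,
\]
so $V\vdash T(A)\subseteq I$. The same computation handles $T(A)\dashv V$, since $u\dashv g(b\circ c,x)w=uM\big(g(b\circ c,x)w\big)$. For $V\dashv T(A)$ there is no need to pass through $\vdash$ at all: $V\dashv w=V\cdot M(w)\subseteq V$ directly, because $V$ is a right ideal of $T(A)$. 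Your treatment of $T(A)\vdash V$ is on the right track (reduce via~\eqref{eq:A-la-Poisson} to $c\vdash g(b\circ c,x)$ and compute), and the paper carries out precisely that computation, showing the result lies in $V$ modulo an SLS identity; you should complete that step explicitly rather than leave it as a sketch.
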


\begin{proof}
Since $I$ is already known to be an ideal, it remains
to consider the right ideal $V$ of $T(A)$.
By definition, $V\dashv T(A) \subseteq V$.

Denote 
\[
g(a\circ b, x) = (a\circ b)x - (a\circ x)\circ b,\quad a,b,x\in A.
\]
Then $M(g(a\circ b, x)v)=0$ for all $v\in T(A)$ by \eqref{eq:SLS-1}, and thus 
$VA\vdash T(A) = 0$. 
For $v=1$, we may see 
$M(g(a\circ b,x))= (a\circ b)\circ x - (a\circ x )\circ b$.
Hence, 
$uM(g(a\circ b, x)) = u((a\circ b)\circ x - (a\circ x )\circ b)
\equiv u h(a,x,b) - uh(a,b,x)\equiv 0 \pmod I$.
It follows that 
\[
V\vdash T(A) \subseteq I
\]
by \eqref{eq:A-la-Poisson}.

Similarly, 
$u\dashv g(a\circ b,x)v = uM(g(a\circ b,x)v) \in I$.
 for all $u\in T(A)$, $v\in T(A)\cup \{1\}$.

To complete the proof it remains to calculate 
\begin{multline*}
c\vdash g(a\circ b, x)
=
c\vdash (a\circ b)x - c\circ ((a\circ x)\circ b) \\
=
(c\circ (a\circ b))x + (a\circ b)(c\circ x) - (a\circ b)cx
- c\circ ((a\circ x)\circ b) \\
\equiv 
(c\circ x)\circ (a\circ b) + (a\circ(c\circ x))\circ b
- ((a\circ c)\circ x)\circ b
- c\circ ((a\circ x)\circ b)
\pmod V.
\end{multline*}
The latter relation is zero in every SLS-algebra.
Then $u\vdash g(a\circ b, x)v \in J$ 
for all $u,v\in T(A)$ by \eqref{eq:A-la-Poisson}.
\end{proof}

\begin{lemma}\label{lem:Nov-quot}
For all $u,w,v \in T(A)$ we have
\[
(u\dashv v)\dashv w - (u\dashv w)\dashv v \in  J,
\]
\[
(u\vdash v)\vdash w - (u\vdash w)\dashv v \in J.
\]
In particular, $\mathcal D(A)/J$ is a Novikov dialgebra.
\end{lemma}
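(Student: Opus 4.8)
The plan is to verify the two Novikov-type identities \eqref{eq:diRCom} modulo $J$, since together with the 0-identities \eqref{eq:zero-id} (which hold identically on $\mathcal D(A)$ by Proposition~\ref{prop:tensor-diLS}) and the $\di\LSym$-identities \eqref{eq:diLSym} (likewise identical on $\mathcal D(A)$), the relations \eqref{eq:diRCom} are exactly what is missing for $\mathcal D(A)/J$ to be a $\di\Nov$-algebra in the sense of Example~\ref{exmp:diNov}. By Lemma~\ref{lem:Mprod} we may replace each left argument by its $M$-image: $u\vdash w = M(u)\vdash w$ and $u\dashv w = u\vdash M(w) = u\,M(w)$, so everything reduces to statements about how $M$-products and the associative product on $T(A)$ interact. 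I would first reduce $u\dashv v = uM(v)$ and then compute $(u\dashv v)\dashv w - (u\dashv w)\dashv v = uM(v)M(w) - uM(w)M(v)$; since $I$ is an ideal (Lemma~\ref{lem:ideal1}) containing all commutators of the form $u'xy - u'yx$ with the first letter of $u'$ frozen, this difference lies in $I\subseteq J$ once we note that $uM(v)M(w)$ and $uM(w)M(v)$ differ by permuting tensor factors after the initial letter of $u$. This handles the first (``$\dashv\dashv$'') identity.

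For the second identity I would use $(u\vdash v)\vdash w = M(u\vdash v)\vdash w = (M(u)\circ M(v))\vdash w$ and $(u\vdash w)\dashv v = (M(u)\circ M(w)) M(v)$ via Lemma~\ref{lem:Mprod}. Writing $a = M(u)$, $b = M(v)$, $c = M(w)$, the first term is $(a\circ c)\vdash c'$-type expansion — more precisely $(a\circ b)\vdash w$, which by \eqref{eq:tensor-vdash} (and induction on the length of $w$, or by passing through $M$) I want to compare with $(a\circ c)\circ b$ read inside $T(A)$. The natural bridge is the element $h(x,y,z) = xyz + (x\circ z)\circ y - (x\circ y)z - (x\circ z)y \in K$ established just before Lemma~\ref{lem:ideal1}, together with $g(a\circ b,x) = (a\circ b)x - (a\circ x)\circ b \in V$ from Lemma~\ref{lem:di-ideal-I}: these let me rewrite $(a\circ b)\vdash w$ modulo $J$ as something manifestly equal, modulo $J$, to $((a\circ c)\circ b)$-style terms matching $(a\circ c)\circ M(v) = M((u\vdash w)\dashv v)$. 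Concretely I would apply \eqref{eq:A-la-Poisson} to move the leading factor inside, then replace the resulting $(a\circ b)x$-patterns by $(a\circ x)\circ b$ using $V$, and the remaining $xyz$-patterns using $K$, and check the scalar bookkeeping cancels.

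The main obstacle I anticipate is the length-induction in the second identity: unlike the first, which is a clean commutator argument, the expansion of $u\vdash w$ for $w$ of length $>1$ produces the correction terms $w(M(u)\circ a) - wM(u)a$ from \eqref{eq:tensor-vdash}, and I must check these are absorbed by $I$ (or by the combination $K+V$) at each inductive step rather than accumulating. I expect this to work precisely because $M(uf) \in K$ whenever $f \in K$ (shown in the proof of Lemma~\ref{lem:ideal1}) and because $V\vdash T(A)\subseteq I$ (shown in Lemma~\ref{lem:di-ideal-I}), so the correction terms land in $J$; but the careful version requires tracking that the first letter is never disturbed, which is exactly the property that makes $I$ an ideal. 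Once both containments are in hand, $\mathcal D(A)/J$ satisfies \eqref{eq:zero-id}--\eqref{eq:diRCom}, hence is a Novikov dialgebra, and since $V\cap A = 0$ and $I\cap A = 0$ we get $J\cap A = 0$, so $A$ embeds into $\mathcal D(A)/J$ via $\vdash$; combined with Corollary~\ref{cor:Dialg-embedding} and Theorem~\ref{thm:diNov-Perm} this will finish the ``if'' direction of Theorem~\ref{thm:NiceSpec}.
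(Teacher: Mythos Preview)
Your strategy matches the paper's proof: the first containment is exactly $uM(v)M(w)-uM(w)M(v)\in I$, and the second is an induction on the length of $w$, with the base case absorbed by $V$ via $g(M(u)\circ a,M(v))$ and the inductive correction terms assembling into $w\,h(M(u),a,M(v))\in I$.

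Two things to fix. First, the line $(u\vdash w)\dashv v=(M(u)\circ M(w))M(v)$ is wrong for $|w|>1$: Lemma~\ref{lem:Mprod} gives $M(u\vdash w)=M(u)\circ M(w)$, not $u\vdash w=M(u)\circ M(w)$. The correct expression is $(u\vdash w)\dashv v=(u\vdash w)M(v)$, and you must expand $u\vdash w$ via \eqref{eq:tensor-vdash}; this is precisely why the induction and the element $h$ are needed, and your later paragraph shows you understand this, so the earlier formula is just a slip to delete. Second, your closing sentence ``since $V\cap A=0$ and $I\cap A=0$ we get $J\cap A=0$'' is not a valid implication: two subspaces each meeting $A$ trivially can have a sum that meets $A$ nontrivially. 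That step is the content of a separate lemma (Lemma~\ref{lem:Intrsect-zero}), whose proof sorts the generators of $I$ by whether their leading letter lies in $A\circ A$ and shows the $X_0$-part already lies in $V$; it is not a consequence of the present lemma.
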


\begin{proof}
The first relation follows from the definition of operations:
\[
(u\dashv v)\dashv w - (u\dashv w)\dashv v 
= uM(v)M(w) - uM(w)M(v) \in I .
\]
Let us prove the second one by induction on the length of~$w$.
For  $w=a\in A$,
\[
 (u\vdash v)\vdash a = M(u\vdash v)\circ a = (M(u)\circ M(v))\circ a,
\]
\[
 (u\vdash a)\dashv v =  (M(u)\circ a)M(v) \equiv \mu_{M(v)} (M(u)\circ a) \pmod V.
\]
Hence,
 $(u\vdash v)\vdash a - (u\vdash a)\dashv v \in V\subset J$.
Next,
\begin{multline*}
 (u\vdash v)\vdash wa 
 = ((u\vdash v) \vdash w)a + w(M(u\vdash v) \circ a) -  w M(u\vdash v)a  \\
 = ((u\vdash v) \vdash w)a + w((M(u)\circ M(v)) \circ a) -  w (M(u)\circ M(v))a, 
\end{multline*}
\begin{multline*}
 (u\vdash wa)\dashv v =  (u\vdash wa)M(v) \\
 = (u\vdash w)aM(v) + w(M(u)\circ a ) M(v) - wM(u)aM(v) \\
\equiv 
(u\vdash w)M(v)a + w(M(u)\circ a ) M(v) - wM(u)aM(v) \pmod I.
\end{multline*}
Hence, 
\begin{multline*}
(u\vdash v)\vdash wa  -    (u\vdash wa)\dashv v  \\
\equiv 
((u\vdash v) \vdash w)a - (u\vdash w)M(v)a 
+ wh(M(u),a,M(v))  \\
= ((u\vdash v) \vdash w - (u\vdash w)\dashv v) a 
+ wh(M(u),a,M(v)) \equiv 0 \pmod J.
\end{multline*}
\end{proof}

\begin{remark}
Suppose $A$ is a Novikov algebra. If $T$ is an ideal of 
the left-symmetric algebra $\mathcal D(A)$
such that $\mathcal D(A)/T$ is a Novikov algebra then 
$T$ contains $uh(x,y,z)$ for all $u\in T(A)$, $x,y,z\in A$
(it follows from the proof of Lemma~\ref{lem:Nov-quot}). 
Therefore, the universal enveloping Novikov dialgebra $\mathcal N(A)$
of $A$ is actually an image of $A\oplus A^{\otimes 2}\otimes A^{\otimes 3}\subset T(A)$. In particular, if $A$ is finite-dimensional then so is $\mathcal N(A)$.
\end{remark}

\begin{lemma}\label{lem:Intrsect-zero}
 $A\cap J = \{0\}$.
\end{lemma}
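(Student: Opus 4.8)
The plan is to exhibit a linear map $q\colon T(A)\to A$ that restricts to the identity on $A$ and kills $J$; then $A\cap J=\{0\}$ is immediate, since $a\in A\cap J$ forces $a=q(a)=0$.

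The map $q$ comes from a normal form modulo $V$. The spanning set for $V$ given above --- the elements $ax_1\dots x_n-\mu_{x_n}\cdots\mu_{x_1}(a)$ with $a\in X_0$, $x_i\in X$, $n\ge1$ --- is triangular: distinct choices of $(a,x_1,\dots,x_n)$ produce distinct leading words $ax_1\dots x_n$, which are precisely the basis words of $T(A)$ of length $\ge 2$ whose first letter lies in $X_0$, while the trailing term $\mu_{x_n}\cdots\mu_{x_1}(a)$ lies in $A\circ A\subseteq A$. Hence these elements are linearly independent, form a basis of $V$, and
\[
T(A)=V\oplus A\oplus W',
\]
where $W'$ is the span of all basis words of length $\ge 2$ whose first letter lies in $X_1$. (In particular this recovers $V\cap A=\{0\}$.) Let $q$ be the projection of $T(A)$ onto the summand $A$. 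Then $q|_A=\mathrm{id}$ and $q(V)=0$ by construction, so everything reduces to proving $q(I)=0$, i.e.\ $I\subseteq V\oplus W'$.

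The tool is a reduction rule: for $c,d\in A$ and any $g\in T(A)$ one has $(c\circ d)\,g\equiv M(c\,g\,d)\pmod V$. By bilinearity it is enough to take $g=z_1\dots z_r$ a word; applying the generators $g(\cdot,z_i)$ of $V$ exactly as in the derivation of the displayed formula for $V$ gives $(c\circ d)\,z_1\dots z_r\equiv\mu_{z_r}\cdots\mu_{z_1}(c\circ d)\pmod V$, and a short induction on $r$ (using $\mu_z(c\circ d)=(c\circ z)\circ d$ and the defining relation $M(wa)=M(w)\circ a$) identifies the right-hand side with $M(c\,z_1\dots z_r)\circ d=M(c\,z_1\dots z_r\,d)$. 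The ``nice'' hypothesis enters here precisely to make $\mu_z$ a well-defined map on $A\circ A$, hence $V$ unambiguously defined.

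Now take a spanning element $uf$ of $I$, with $u$ a nonempty word and $f\in K$, and write $u=eu'$ where $e\in X=X_0\cup X_1$ is the first letter of $u$. If $e\in X_1$, then expanding $u'f$ into words shows $uf$ is a combination of words of length $\ge 2$ beginning with $e$, so $uf\in W'$. If $e\in X_0$, write $e=\sum_i c_i\circ d_i\in A\circ A$; the reduction rule gives
\[
uf=\sum_i(c_i\circ d_i)(u'f)\equiv\sum_i M\bigl((c_iu')\,f\,d_i\bigr)\pmod V ,
\]
with $c_iu'$ read as $c_i$ when $u'$ is empty, and each summand is $0$ because $M(u_1fv_1)=0$ for $f\in K$ and nonempty $u_1,v_1\in T(A)$. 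In both cases $uf\in V\oplus W'$, whence $I\subseteq V\oplus W'$ and $q(I)=0$, finishing the proof. The step that will need the most care is the normal-form set-up $T(A)=V\oplus A\oplus W'$, i.e.\ the bookkeeping of leading words of the generators of $V$, together with the careful parenthesization when invoking $M(u_1fv_1)=0$ (one must be sure both $u_1$ and $v_1$ are nonempty); conceptually, however, the content is just that, modulo $V$, an element of $I$ either keeps an $X_1$-letter in the lead and so lands in $W'$, or collapses to an application of $M$ to a word containing a member of $K$ sandwiched between two nonempty factors and so vanishes. Combined with Lemma~\ref{lem:Nov-quot} and Corollary~\ref{cor:Dialg-embedding}, this yields the ``if'' part of Theorem~\ref{thm:NiceSpec}.
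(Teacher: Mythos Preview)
Your proof is correct and follows essentially the same approach as the paper. Both arguments hinge on the same two observations: the reduction rule $(c\circ d)\,g\equiv M(cgd)\pmod V$ (the paper's \eqref{eq:Mu-via-M}), which forces any $I$-generator whose first letter lies in $X_0$ to land in $V$; and the fact that $I$-generators whose first letter lies in $X_1$ consist only of words of length $\ge 2$, hence lie in your $W'$. You package the conclusion via the explicit direct-sum splitting $T(A)=V\oplus A\oplus W'$ and the projection $q$, whereas the paper argues by contradiction and monomial comparison, but the content and the key computations are identical.
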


\begin{proof}
Recall that a linear basis $X$ of $A$ is chosen in the form 
$X = X_0\cup X_1$
where $X_0$ is a basis of $A\circ A$ and $X_1$ is its complement.
Then $T(A)$ is the space of non-commutative polynomials in $X$. 

By definition, the space $V$ is spanned by 
\begin{equation}\label{eq:newVspan}
g(a,x_1\dots x_n) = ax_1\dots x_n  - \mu_{x_n}\dots \mu_{x_1}(a),
\quad a\in X_0,\ x_1,\dots, x_n \in X.
\end{equation}

Suppose $u\in T(A)$ is a word starting with a letter from $X_0$:
$u = (a\circ b)w$.
Then for every $f\in K$, $v\in T(A)\cup \{1\}$ we have 
\[
ufv = (a\circ b)wfv \equiv M(awfvb) = 0\pmod V
\]
by \eqref{eq:Mu-via-M}.

Assume $J=I+V$ has a nonzero intersection with $A$. 
Then we may write the following equation in $T(A)$:
\[
f_1+f_0+g= a \in A,\quad a\ne 0,
\]
where $g\in V$, $f_i$ is a linear combination of the elements like
$ufv$ with $u$ starting with a letter from $X_i$, $i=0,1$. 
As we noted above, $f_0\in V$. 
Every monomial in $f_1$ is of length at least~2 by the definition of~$I$.
All monomials that emerge in elements of $V$ start 
with a letter from $X_0$, so $f_1$ must be zero.

Therefore, we come to $a=f_0+g \in V\cap A$ which is impossible 
for $a\ne 0$.
\end{proof}

\begin{corollary}
 A nice SLS-algebra $(A,\circ )$
 may be
 embedded into a Novikov dialgebra $(\mathcal N(A),\vdash,\dashv )$ 
 in such a way  that 
 $a\vdash b = a\circ b $ for $a,b\in A$.
\end{corollary}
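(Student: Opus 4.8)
The plan is to take $\mathcal N(A):=\mathcal D(A)/J$, where $J=I+V$ is the ideal of Lemma~\ref{lem:di-ideal-I}, as the universal enveloping Novikov dialgebra of $A$, and to verify that the obvious map from $A$ is the desired embedding. By Lemma~\ref{lem:di-ideal-I} the quotient $\mathcal N(A)$ inherits a left-symmetric dialgebra structure from $\mathcal D(A)$, and by Lemma~\ref{lem:Nov-quot} this quotient is in fact a Novikov dialgebra. Write $\pi\colon\mathcal D(A)\to\mathcal N(A)$ for the canonical projection; it is in particular a homomorphism of the associated left-symmetric algebras $\mathcal D(A)^\circ\to\mathcal N(A)^\circ$.

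Next I would recall from Section~3 that $\iota\colon A\to\mathcal D(A)$, $\iota(a)=a$, is a homomorphism $A\to\mathcal D(A)^\circ=(\mathcal D(A),\vdash)$; indeed $a\vdash b=M(a)\circ b=a\circ b$ for $a,b\in A$ by \eqref{eq:tensor-vdash}, since $M(a)=a$. Hence $\psi:=\pi\iota\colon A\to\mathcal N(A)^\circ$ is a homomorphism of left-symmetric algebras, so $\psi(a)\vdash\psi(b)=\psi(a\circ b)$ for all $a,b\in A$; identifying $A$ with $\psi(A)$, this is precisely the relation $a\vdash b=a\circ b$ demanded in the statement. Finally $\psi$ is injective: if $\psi(a)=0$ then $a\in J$, whence $a\in A\cap J=\{0\}$ by Lemma~\ref{lem:Intrsect-zero}. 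Thus $\psi$ embeds the nice SLS-algebra $A$ into the Novikov dialgebra $\mathcal N(A)$ in the prescribed way.

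At this level the corollary is only a bookkeeping step: all the genuine work is contained in Lemmas~\ref{lem:di-ideal-I}--\ref{lem:Intrsect-zero}. The step I expect to be the main obstacle is $A\cap J=\{0\}$ (Lemma~\ref{lem:Intrsect-zero}): it needs the explicit monomial description \eqref{eq:newVspan} of $V$, the observation that $ufv\in V$ whenever the word $u$ begins with a letter of $X_0$ and $f\in K$, and a leading-term argument separating the $I$-contribution from the $V$-contribution. I would also add, via Proposition~\ref{prop:NormalU-LS}, that $\mathcal N(A)=\mathcal D(A)/J$ really is the universal enveloping Novikov dialgebra of $A$ — any homomorphism from $A$ to the $\vdash$-algebra of a Novikov dialgebra factors through $\mathcal D(A)$ and then annihilates $J$ — and, combining this corollary with Corollary~\ref{cor:Dialg-embedding}, conclude the ``if'' direction of Theorem~\ref{thm:NiceSpec}.
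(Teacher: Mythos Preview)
Your proposal is correct and follows exactly the paper's approach: the paper's proof of this corollary is the single line ``Indeed, $\mathcal N(A)=\mathcal D(A)/J$ is the desired dialgebra,'' relying on Lemmas~\ref{lem:di-ideal-I}, \ref{lem:Nov-quot}, and~\ref{lem:Intrsect-zero} exactly as you spell out. Your added remarks on universality and on deducing the ``if'' direction of Theorem~\ref{thm:NiceSpec} via Corollary~\ref{cor:Dialg-embedding} also match the paper's subsequent Remark and concluding paragraph.
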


Indeed,
$\mathcal N(A) = \mathcal D(A)/J$ is the desired dialgebra.

\begin{remark}
Note that the generators of $I$ and $V$ represent the necessary conditions  on a Novikov dialgebra which  contains $(A,\circ)$
in such a way that 
 $a\vdash b = a\circ b $ for $a,b\in A$.
Hence, $\mathcal D(A)/J$ is the universal enveloping Novikov dialgebra of $(A,\circ)$.
\end{remark}

Now we may finish the proof of Theorem~\ref{thm:NiceSpec}.
A nice SLS-algebra $(A,\circ)$ embeds into a Novikov dialgebra 
$(\mathcal N(A),\vdash,\dashv )$, and the latter embeds into 
a differential Perm-algebra $(P,d)$ by Theorem~\ref{thm:diNov-Perm}.
Since $a\circ b = a\vdash b = ad(b) \in P$ for $a,b\in A$, 
this is a desired embedding.

\begin{corollary}
 The free SLS-algebra $\SLS\langle X\rangle $ is nice. 
\end{corollary}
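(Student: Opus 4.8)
The plan is to obtain this corollary as an immediate consequence of the two main theorems already proved, with no genuinely new argument required. First I would invoke Theorem~\ref{thm:SLS-special}: it identifies $\SLS\langle X\rangle$ with the subalgebra $F$ generated by $X$ inside the differential Perm-algebra $\Perm\Der\langle X,d\rangle$, the isomorphism being the map $\tau$ with $\tau(f\circ g)=\tau(f)d(\tau(g))$. In other words, $\SLS\langle X\rangle$ embeds into a differential Perm-algebra precisely in the way demanded by the hypothesis of Theorem~\ref{thm:NiceSpec}.

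Then I would apply the ``only if'' direction of Theorem~\ref{thm:NiceSpec}, which asserts that any left-symmetric algebra admitting such an embedding into a differential Perm-algebra $(P,d)$ is a nice SLS-algebra: the required operator $\mu_x$ is realized as left multiplication by $d(x)$ in $P$, since $d(x)ad(b)=ad(x)d(b)=(a\circ x)\circ b$ by left commutativity. Applying this with $P=\Perm\Der\langle X,d\rangle$ and $A=\SLS\langle X\rangle$ yields that $\SLS\langle X\rangle$ is nice.

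If one prefers a self-contained argument avoiding a formal citation of Theorem~\ref{thm:NiceSpec}, I would argue directly: given a relation $\sum_i a_i\circ b_i=0$ in $\SLS\langle X\rangle$, push it through $\tau$ to get $\sum_i \tau(a_i)d(\tau(b_i))=0$ in $\Perm\Der\langle X,d\rangle$; multiplying on the left by $d(\tau(x))$ and using associativity and left commutativity gives $\sum_i\bigl(\tau(a_i)d(\tau(x))\bigr)d(\tau(b_i))=\sum_i\tau\bigl((a_i\circ x)\circ b_i\bigr)=0$, whence $\sum_i(a_i\circ x)\circ b_i=0$ by the injectivity of $\tau$. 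This simultaneously shows that $\mu_x$ is well defined on $A\circ A$, which is exactly the niceness condition. In either form there is no real obstacle left to clear: the substance is carried entirely by the injectivity of $\tau$ (Proposition~\ref{prop:Prop3}) established in Section~\ref{sec:BasisFree} together with Theorem~\ref{thm:diNov-Perm}, and the present statement only has to assemble these pieces.
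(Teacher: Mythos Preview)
Your proposal is correct and is exactly the argument the paper intends: the corollary is stated without proof because it follows immediately by combining Theorem~\ref{thm:SLS-special} (the free SLS-algebra embeds into $\Perm\Der\langle X,d\rangle$) with the ``only if'' direction of Theorem~\ref{thm:NiceSpec}. Your optional direct verification via the injectivity of $\tau$ is also fine and simply unpacks that ``only if'' direction in this particular case.
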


\begin{corollary}
 The variety SLS is the smallest one containing the class 
 of all special left-symmetric algebras.
\end{corollary}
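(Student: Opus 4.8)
The statement asserts that the variety $\SLS$ is the \emph{smallest} variety containing the class $\mathcal K$ of all special left-symmetric algebras. Unwinding this, I must check two things: first, that $\SLS$ itself contains $\mathcal K$; and second, that $\SLS$ is contained in every variety $\mathcal V$ with $\mathcal K\subseteq\mathcal V$.

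The first point is essentially already recorded in the paper. A differential Perm-algebra $(P,d)$ satisfies the identities \eqref{eq:SLS-1} and \eqref{eq:SLS-2} with respect to the operation $f\circ g=fd(g)$ (this is the computation opening Section~\ref{sec:BasisFree}), and it is of course left-symmetric; hence every subalgebra of $P^{(d)}$ is an SLS-algebra. Thus $\mathcal K\subseteq\SLS$, and since $\SLS$ is a variety it contains the variety generated by $\mathcal K$.

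For the second point I would use the standard fact of universal algebra that any variety $\mathcal W$ is generated, as a variety, by its free algebra $\mathcal W\langle Y\rangle$ on a countably infinite set $Y$: every identity that fails somewhere in $\mathcal W$ already fails in $\mathcal W\langle Y\rangle$, so $\mathcal W$ is the smallest variety containing $\mathcal W\langle Y\rangle$. Apply this with $\mathcal W=\SLS$. By Theorem~\ref{thm:SLS-special}, $\SLS\langle Y\rangle$ is isomorphic to the subalgebra of $\Perm\Der\langle Y,d\rangle$ generated by $Y$ under $f\circ g=fd(g)$, so $\SLS\langle Y\rangle$ is special, i.e.\ $\SLS\langle Y\rangle\in\mathcal K$ (alternatively, invoke the preceding corollary that $\SLS\langle Y\rangle$ is nice together with Theorem~\ref{thm:NiceSpec}). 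Hence $\SLS\langle Y\rangle\in\mathcal V$ for any variety $\mathcal V\supseteq\mathcal K$, and since $\mathcal V$ is closed under subalgebras, homomorphic images, and direct products, it contains the whole variety generated by $\SLS\langle Y\rangle$, which is $\SLS$. Therefore $\SLS\subseteq\mathcal V$, and combining the two points gives that $\SLS$ is the least variety containing $\mathcal K$.

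There is no genuine difficulty here once Theorem~\ref{thm:SLS-special} is available; the only point requiring a little care is the size of the generating set $Y$, since a free algebra on too few generators need not generate the whole variety. A countably infinite $Y$ always works, and in fact, because all the defining identities of $\SLS$ are multilinear in at most four variables, one could even take $|Y|=4$.
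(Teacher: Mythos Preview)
Your proof is correct and follows exactly the approach the paper intends: the corollary is left without an explicit proof, but the introduction already indicates that it ``follows from the observation that the free SLS-algebra is special,'' which is precisely your argument via Theorem~\ref{thm:SLS-special} together with the standard fact that a variety is generated by its free algebra on countably many generators. Your remark that $|Y|=4$ would suffice (since the defining identities involve at most four variables) is a nice sharpening the paper does not mention.
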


\end{document}